\documentclass[12pt,a4paper,twoside]{article}

\usepackage[latin1]{inputenc}
\usepackage{amsmath}
\usepackage{amsfonts}
\usepackage{amsthm}
\usepackage{amssymb}
\usepackage{graphicx}
\usepackage{color}
\synctex=1


\newcommand{\de}{\partial}
\newcommand{\dl}{\delta}
\newcommand{\De}{\Delta}
\newcommand{\Om}{\Omega}
\newcommand{\eL}{\mathcal{L}}
\newcommand{\eF}{\mathcal{F}}
\newcommand{\intt}{\int_0^T}
\newcommand{\intx}{\int_\Om}
\newcommand{\half}[1]{\frac{#1}{2}}
\newcommand{\sigs}{\sigma^2}
\newcommand{\Th}{\Theta}
\newcommand{\Nth}{N_{\Theta}}
\newcommand{\R}{\mathbb{R}}
\def\hJ {{\hat J}}

\def\dt{{\delta t}}

\setlength{\textwidth}{16cm}
\setlength{\evensidemargin}{0pt}
\setlength{\oddsidemargin}{0pt}

\newtheorem{algorithm}{Algorithm}

\newtheorem{rem}{Remark}
\newtheorem{thm}{Theorem}
\newtheorem{lem}{Lemma}

\title{Calibration of L\'evy Processes using Optimal Control of Kolmogorov Equations with Periodic Boundary Conditions}

\author{M.~Annunziato\thanks{Dipartimento di Matematica, Universit\`a degli 
Studi di Salerno, 
Via Giovanni Paolo II, 132 - 84084 Fisciano, Italia (\tt{mannunzi@unisa.it})}
\and
H.~Gottschalk\thanks{FBC - Fachgruppe f\"ur Mathematik und Informatik, Gau\ss str.\ 20, 42119 Wuppertal,
Germany ({\tt hanno.gottschalk@uni-wuppertal.de})}}

\begin{document}

\markboth{M. Annunziato and H. Gottschalk}{Calibration of L\'evy Processes}


\maketitle

\begin{abstract} 
We present an optimal control approach to the problem of model calibration for L\'evy processes based on a non parametric estimation procedure.
The calibration problem is of considerable interest in mathematical finance and beyond. 
Calibration of L\'evy processes is particularly challenging as the jump distribution is given by an arbitrary L\'evy measure, which form a infinite dimensional space. 
In this work, we follow an approach which is related to the maximum likelihood theory of sieves \cite{GH}.  
 The sampling of the L\'evy process is modelled as independent
observations of the stochastic process at some terminal time $T$.  
We use a generic spline discretization of the L\'evy jump measure and select an adequate size of the spline basis using the Akaike Information Criterion (AIC) \cite{BA}. 

The numerical solution of the L\'evy calibration problem requires efficient optimization of the log likelihood functional in high dimensional parameter spaces. 
We provide this by the optimal control of Kolmogorov's forward equation for the probability density function (Fokker-Planck equation). 
The first order optimality conditions are derived based on the Lagrange multiplier technique in a functional space. 
The resulting Partial Integral-Differential Equations (PIDE) are discretized, numerically solved and controlled using scheme a composed of Chang-Cooper, BDF2 and
direct quadrature methods. 
For the numerical solver of the Kolmogorov's forward equation we prove conditions for 
non-negativity and stability in the $L^1$ norm of the discrete solution.
To set boundary conditions, we argue that any L\'evy process on the real line can be projected to a torus, where it again is a L\'evy process. If the torus is sufficiently large, the loss of information is negligible.        
\end{abstract}

\noindent \textbf{MSC (2010):} 93E10 (primary) 49K20, 60G51, 62G05 (secondary)

\vspace{5mm}

\noindent \textbf{Key words:} Optimal control of PIDE, Kolmogorov equations, Fokker-Planck equation, L\'evy processes, non-parametric maximum likelihood method, Akaike information criterion, financial data.

\section{Introduction}
L\'evy processes play a large r\^ole in contemporary mathematical finance \cite{CT}, but also in many areas of physics, see e.g. \cite{App,Sak}. A real valued L\'evy process is a stochastic process $Y(t)$ that has increments $Y(t)-Y(s)$, $t\geq s$, that are independent of the past. The increments are also stationary in the sense that the probability distribution of the increment only depends on the time difference $t-s$. Furthermore, $Y(0)=0$ and a stochastic continuity condition for $t=0$ holds, see e.g. \cite{App}. Under the given conditions, the characteristic function of $Y(t)$ is given by the L\'evy Khinchine representation
\begin{equation} \label{eqa:LevyKhinchine}
\mathbb{E}\left[e^ {iY(t) k}\right]=e^{t\psi(k)}
\end{equation}
$\mathbb{E}[\cdot]$ stands for the expected value.
$\psi(k)$ is a conditionally positive definite function \cite{BF} that has the following representation in terms of the canonical triplet $(b,\sigma^2,\nu)$:
\begin{equation}
\label{eqa:LevyFormula}
\psi(k)=ibk-\frac{\sigma^2}{2}k^2+\int_{\mathbb{R}\setminus \{0\}}\left(e^{isk}-1-isk 1_{\{|s|\leq 1\}}(s)\right)d\nu(s).
\end{equation} 
$b,\sigma^2\in \R$ are constants, $\sigma^2\geq 0$, and the L\'evy measure $\nu$ is a positive measure on $\R\setminus\{0\}$ such that 
\begin{equation}
\label{eqa:LevyCondition}
\int_{\mathbb{R}\setminus \{0\}} \min(1,s^2)d\nu(s)<\infty.
\end{equation}
In Eq. (\ref{eqa:LevyFormula}) $1_{\{|s|\leq 1\}}(s)$ is the characteristic function of the set $\{|s|\leq 1\}$ which takes the value $1$ on this set and $0$ otherwise.

The calibration problem for L\'evy processes consists of the estimation of the canonical triplet $(b,\sigma^2,\nu)$ given the observation $Y(t_j)$ of the process' trajectory $Y(t)$ at some prescribed times $t_j$, $j=1,\ldots,L$. For instance, $Y(t)$ could be the process of log-Returns of some asset and $t_j$ could be the closing time of the $j$-th trading day  ('historic low frequency data'). As the $j$-th increment of the process $Y_j=Y(t_j)-Y(t_{j-1})$ has the same distribution as $Y(T)$, if $T=t_{j}-t_{j-1}$, from a statistical point of view this is equivalent to the $L$-fold independent observation of the terminal values $Y(T)$ at time $T$.

$Y(t)$ can also be understood as the solution to the Stochastic Differential Equation (SDE) of jump-diffusion type
\begin{equation}
\label{eqa:SDE}
dY(t)=bdt+\sigma dB(t) +\int_{\{|y|\leq 1\}}y \tilde N(dt,dy)+\int_{\mathbb{R}\setminus\{|y|> 1\}} y N(dt,dy),~~Y(0)=0.
\end{equation}
Here $B(t)$ is a standard Brownian motion and $ N((t,t+\Delta t],A)\sim{\rm Po}(\nu(A)\Delta t)$ is the random counting measure of jumps of height in the set $A\subseteq \mathbb{R}$ in the time interval $(t,t+\Delta t]$. ${\rm Po}(\lambda)$ stands for the Poisson distribution with intensity $\lambda$ and $\tilde N((t,t+\Delta t],A)=N((t,t+\Delta t],A)-\nu(A)\Delta t$ is the compensated or Martingale jump measure for small jumps, where we require $A\subset \mathbb{R}\setminus \{|x|\leq\varepsilon\}$ for some $\varepsilon>0$, see \cite{App} for further details.
   
The calibration problem for L\'evy processes, respectively the solution of (\ref{eqa:SDE}),  unfortunately is ill posed: The collection of all L\'evy measures $ \nu$ is infinite dimensional, while only $L$ observations are available. Direct application of the maximum likelihood principle in this situation leads to severe over-fitting issues \cite{GH}. In many applications, one  chooses families of L\'evy measures $\nu(\bar\alpha)$ that depend only on a finite dimensional parameter vector $\bar\alpha$, see e.g. \cite{Iac}.  Furthermore, one often restricts to such parametrizations, where the density $f(x,T,\bar\alpha)$ of the probability distribution of $Y(t)$ can be calculated explicitly or at low numerical cost. One then assumes that the true distribution of $Y(t)$ is inside the prescribed set and uses the maximum likelihood approach for calibration \cite{Fer}.  This assumption might however not be justified and give rise to modelling errors.

As a non-parametric alternative, one can use generic parametrizations for the density of the L\'evy measure $\nu$ that can be  refined depending on the amount of data available. This gives rise to a hierarchy -- or sieve \cite{GH} -- of maximum likelihood problems with a finite number of parameters. If a suitable finite parametrization has been chosen, it remains to solve the maximum likelihood estimate at a given level of parametrization. One also has to determine this level based on the quality and also stability of the fits obtained. The resulting  densities can no longer be calculated analytically. Also, solution of the maximum likelihood problem gives rise to high dimensional optimization problems. 

The maximum likelihood method requires a parametric representation of the probability density functions (PDF). The PDF can however be obtained as a solution to the Kolmogorov forward equation (Fokker Planck equation). The parameters $\bar\alpha$ then enter in this equation via coefficients in the generator of the semigroup \cite{App}.  
If the L\'evy measure $\nu$ is not zero,  the generator of both these equations does not only contain a  2nd order partial differential operator, but also an integral operator of convolution type.  This places the model calibration problem in the framework of 
optimal control problems with partial integral differential equations (PIDE) constraints. 

Indeed, we know that the Kolmogorov forward equation is representative of a stochastic process described in terms of SDEs such as that one of Eq. (\ref{eqa:SDE}), where the set of parametrization for the approximation of the PDF, would correspond to a set of controls of the stochastic dynamic equation, so that, jointly to the maximum likelihood problem, it corresponds to a stochastic optimal control problem.
The classical way to deal with the optimal control of stochastic process is by the Dynamic Programming principle and the related Hamilton-Jacobi-Bellman equation for stochastic processes \cite{ber}. 
However, this problem  has been recently framed as a constrained
PDE optimization problem, where the PDE is the Fokker-Planck, i.e. Kolmogorov forward, equation
\cite{mar:alf:mma,mar:alf,hjb_fp}. Following this framework, the solution of the maximum 
likelihood problem, i.e. the stochastic optimization, is found by solving the first order
optimality conditions in a functional space, that is the optimality system 
consisting of two PIDEs, named forward and backward (or adjoint) equations, plus an optimality condition.

This optimality system can be numerically solved by a gradient-based iterative algorithm as follows.
The Kolmogorov forward equation has a set of control parameters in order to maximize the log-Likelihood functional for its terminal value. These controls involve the Kolmogorov backward equation  (adjoint equation) with suitable terminal condition, corresponding to the log-likelihood functional. 
Hence, given an initial approximation of the unknown parametrization, first solve the forward equation,
then  set up the terminal condition and solve the adjoint one. With both the forward and adjoint solutions, by using the optimality condition equation the gradient is computed. Then with a descending gradient technique, such as a non linear conjugate gradient method, found a better approximation of the 
control parameters and repeat until the satisfying accuracy for the parametrization is found.

Since this maximum likelihood problem could have an high dimensional space 
ad a huge number $L$ of observations, a fast, stable and enough accurate numerical solver for the
PIDE is required.
In our case the Kolmogorov forward equation is a PIDE of parabolic differential operator type. 
Such kind of PIDEs are, e.g., investigated in the option pricing models
as a generalization of the Black-Scholes equation. The first difficulty to numerically 
solve this equation is the integral. In fact, in the case of using a fully implicit
method, it would lead to solve a dense system of equation, for this reason implicit-explicit (IMEX) or operator splitting methods can be applied to bypass this problem (see \cite{duf,bri,con:vol}).
The solution of the Kolmogorov forward equation is a probability density function
that is non negative with constant integral over the domain. Such properties must be owned from the discrete solution too. The Chang-Cooper (CC) is a non-negative and conservative numerical method 
that has been used to solve the classical Fokker-Planck equation \cite{chacoo,mar:alf,bor:moh}.
Here, we use a numerical method that can be classified as IMEX, since we use the CC method with an
implicit time difference scheme for the differential operators of our PIDE, and
evaluate the integral operator at the previous time step solution, i.e. in an explicit way. We prove for the resulting numerical solver: conservativeness, non-negativity and stability in the $L^1$-norm.
The numerical solver for the adjoint equation is obtained directly from the solver for the 
forward equation by using the ``discretize then optimize'' approach to the optimization problem.

Finally, we quote, that for related work with vanishing L\'evy measure see e.g. \cite{mar:alf,BS}, and for estimation procedures based on non parametric approximations of the empirical characteristic function see e.g. \cite{BR}. An approach based on the method of moments and asymptotic expansions of L\'evy densities can be found in \cite{GST}.

The article is organised as follows: In the following Section \ref{sec:Reg} we describe the hierarchy, of estimation problems. We shall show that the estimation problems that can actually be solved numerically can come arbitrarily close, at increasing computational cost, to the fully general L\'evy estimation problem. We also show that the use of periodic boundary conditions in the Kolmogorov equations can be understood in terms of mapping the original L\'evy process on the real line to a derived L\'evy process on the torus. 

In Section \ref{sec:Kol}, we set up the maximum likelihood estimation problem for a given parametrization and derive Kolmogorov's forward (Fokker-Planck) equation and its adjoint (Kolmogorov backward) equation with terminal conditions set by the log-Likelihood objective functional. 
This maximum likelihood estimation problem is solved in the framework of the 
Fokker-Planck optimal control of stochastic processes, as a constrained PDE optimal control problem.

In Section \ref{sec:Num} the discretization for Kolmogorov's equations and the optimal control scheme is derived following a Chang-Cooper and IMEX approach.   
In particular we prove the structural properties of the numerical solution, i.e. conservativeness,
non-negativity and stability.

Section \ref{sec:Cons} gives numerical tests for the consistency of the proposed procedure based on  simulated data. We propose to use Akaike's information criterion (AIC) \cite{BA} to choose an adequate parametrization from the hierarchy of spline parametrizations for density of the L\'evy measure. Three different tests are performed:   We first fit data that are simulated from a given distribution within our hierarchy of L\'evy distributions. The fits obtained are shown to be of very good quality and AIC-selection criterion reproduces almost the original parametrization. As a second test we fit simulated data from a bi-directional Gamma process, i.e. the difference of two independent Gamma processes \cite{App}, which is not inside one of the parametrizations of degree $N_\Theta$, but can be approximated by those. The bi-directional Gamma process is augmented by a small diffusive component and projected to the torus. The AIC selection criterion and the fitting results again reproduce the final distribution of this process rather adequately. As a final test, we select financial data from the German stock exchange DAX in a period between  April 1998 and March 2002 and consider daily log-returns over 1000 trading days. This period represents a rather stable period for the DAX with an almost constant level of the volatility.  After projection, the AIC based method selects a six-parameter spline approximation of the L\'evy measure density. The resulting fits again give a decent reproduction of the empirical distribution.     

Our conclusions and an outlook are given in the final Section \ref{sec:Out}.  

\section{\label{sec:Reg}A Hierarchy of Parametrizations for Maximum Likelihood Estimation of L\'evy Data}

The estimation problem for the L\'evy measure $\nu$ is plagued by several issues. Here we take a step by step approach towards the derivation of a hierarchy of estimation problems that approximate the original one. 

Let $f(x|\bar \alpha)=f(x|\bar \alpha,N_\Theta)$, $\bar\alpha\in \mathbb{R}^{N_\Theta}$, be a family of probability density functions with variable dimension $N_\Theta$  of the parameter set 
$\bar\alpha$, and let $f(x)$ be the (unknown) probability density of $Y_t$. 

For $N_\Theta$ fixed, we apply the Maximum Likelihood method to select an estimated value $\hat \alpha$ based on increasing sample size. It is known from the general theory of Maximum Likelihood \cite{Fer}  that the estimated $\hat\alpha$ converges almost surely to the true value $\bar\alpha$, provided   $f(x)=f(x|\bar \alpha,N_\Theta)$ holds. 

This leaves the question open, which parametrization -- or which value for $N_\Theta$-- one should choose. We solve this problem by maximizing the Akaike Information Criterion (AIC). Maximizing the AIC corresponds to minimizing (asymptotically) the expected Kulback-Leibler distance, or relative entropy, between the true distribution $f(x)$ and its parametric estimate $f(x|\bar \alpha,N_\Theta)$. See \cite{BA}[Chapter 6] for a detailed derivation.

Let us now define the parametrizations with $N_\Theta$ parameters. We intend to show that, for sufficiently large $N_\theta$, we can approximate the original L\'evy distribution to an arbitrary precision. This requires several steps of approximation:

\paragraph{Truncating small and large jumps.} The total mass of the L\'evy measure, $\lambda=\nu(\mathbb{R}\setminus\{0\})$, can be infinite.  This quantity defines the average number of jumps per unit time of the associated L\'evy process \cite{App}. The easiest way to deal with this is to truncate small jumps by setting $\nu_\varepsilon=1_{\{|s|>\varepsilon\}}\nu$  which is a finite measure by equation (\ref{eqa:LevyCondition}). Using (\ref{eqa:LevyCondition}) and dominated convergence, one can moreover prove that $\psi_\varepsilon(k)\to \psi(k)$ for $k\in\mathbb{R}$ as $\varepsilon\searrow 0$. Here $\psi_\varepsilon(k)$ is given by (\ref{eqa:LevyFormula}) with $\nu$ replaced by $\nu_\varepsilon$.   By the continuity theorem of Paul L\'evy, see e.g. \cite[Theorem 23.8]{Bauer}, this then implies (weak) convergence in law of the respective probability distributions. 

At the same time, a finite L\'evy measure $\nu$ permits one to re-parametrize $\psi(k)$ of
Eq. (\ref{eqa:LevyFormula}) via
\begin{equation}
\label{eqa:LevyFormulaII}
\psi(k)=ib k-\frac{\sigma^2}{2}k^2+\int_{\mathbb{R}\setminus \{0\}}\left(e^{isk}-1\right)d\nu(s)
\end{equation}
with $b\to b-\int_{\mathbb{R}\setminus \{0\}}s1_{\{|s|\leq 1\}}(s)d\nu(s)$. In the following we assume $\nu$ to be finite and use parametrization (\ref{eqa:LevyFormulaII}). The last term in  (\ref{eqa:LevyFormulaII}) now has the structure of a compound Poisson distribution, i.e. $Y(t)$ can be represented as $Z(t)+\sum_{j=1}^{N(t)}U_j$ where $Z\sim \mathcal{N}(bt,\sigma^2t)$ is normally distributed, $N(t)\sim {\rm Po}(\lambda t)$ is Poisson distributed with intensity $\lambda t$ and $U_j$ are i.i.d. random variables with distribution given by the normalized L\'evy measure, $U_j\sim \lambda^{-1}\nu$. Also $Z(t),N(t)$ and $U_j$ are all stochastically independent.

With a similar argument, we can cut off large jumps by replacing $\nu$ with $1_{\{|s|\leq \varepsilon^{-1}\}}\nu$. Also in this case, in the limit $\varepsilon \searrow 0$, the truncated L\'evy distributions converge in law to the non truncated one. In the following we thus assume that the support of $\nu$ is contained in some finite interval $[\Omega_a,\Omega_b]$.  The appropriate size of this region can be estimated e.g. by the Chebyshev's inequality using empirical mean and variance from the data. 
 
\paragraph{Regularizing the L\'evy measure.} Given a non negative, continuously differentiable function $\chi$ with compact support such that $\int_\mathbb{R}\chi(x) dx=1$, and setting $\chi_\varepsilon(x)=\frac{1}{\varepsilon}\chi(\frac{x}{\varepsilon})$, we define $\varrho_\varepsilon(s)=\chi_\varepsilon*\nu(s)=\int_{\R\setminus\{0\}}\chi_\varepsilon(s-\xi)d\nu(\xi)$. We consider the regularised measures $d\nu_\varepsilon(s)=\varrho_\varepsilon(s)ds$.  Inserting this measure in (\ref{eqa:LevyFormulaII}), using Fubini's theorem and dominated convergence, one easily shows that $\psi_\varepsilon(k)\to\psi(k)$. This again implies convergence of the related probability distributions in law.

\paragraph{Spline approximation of the densities.} Let thus $d\nu(s)=\varrho(s)ds$ with $\rho(s)$ non negative, continuously differentiable and with compact support. Let $\theta_j$, $j=1,\ldots,n+1$, be a collection of uniform grid points in $\mathbb{R}$ such that the support of $\varrho(x)$ is covered and $\theta_j-\theta_{j-1}=\Delta$. Define $\varrho_\Delta(s)$ by the linear interpolation between points $(\theta_j,\varrho(\theta_j))$. Again, one easily sees that $\varrho_\Delta(s)$ converges to $\varrho(s)$ as $\Delta\searrow 0$. Also, for small $\Delta$, the functions $\varrho_\Delta(s)$ have support in a fixed compact interval and are uniformly bounded by the maximum value of $\varrho(s)$. If we insert measures $d\nu_\Delta(s)=\varrho_\Delta(s)d\nu(s)$ into (\ref{eqa:LevyFormulaII}), this expression converges to the related one with $d\nu(s)=\varrho(s)ds$. This  suffices to prove that an approximation of the probability distributions (in law) is feasible with 1st order spline densities.

\paragraph{Fixing drift and diffusion terms.} One might or might not like to include the drift and diffusion term determined by $b$ and $\sigma^2$ into the estimation problem. Although, in general, these quantities have to be estimated, here we keep  a small fixed value for $\sigma^2$ for reasons of numerical stability of the Kolmogorov equations. As long as this value underestimates the true diffusion, this corresponds to a splitting of the L\'evy process $Y(t)=Z(t)+L(t)$ into stochastically independent components where $Z(t)$ is determined by the purely Gaussian L\'evy triplet $(b,\sigma^2,0)$. $L(t)$ will be a L\'evy process that contains drift, the excess diffusion and jumps. However, the distribution of $L(t)$ at time $T$ can be approximated in the  sense of convergence in law by compound Poisson distributions without drift and L\'evy terms. The explicit construction can e.g. be found in \cite{BF}. We can thus approximate our estimation problem to arbitrary accuracy with a problem where drift and diffusion take fixed values. We also note that a non vanishing diffusion implies the existence of a probability density function $f(x,t)$ for the distribution of $Y(t)$.

\paragraph{Periodic boundary conditions from the projection to a torus.} Another problem with the Kolmogorov forward  equation is the issue of boundary conditions. We have already shown that we can approximate the estimation problem by one where the L\'evy measure $\nu$ has support inside a large interval $[\Omega_a,\Omega_b]$. Let $\Omega$ be the torus $[\Omega_a,\Omega_b]$ with the end points of the interval identified. Let $K=\Omega_b-\Omega_a$, then define the operator $+_K$ as $x+_K y:=(x+y)\mod (K)$ a group operation on $\Omega$, with $(\cdot) \mod (\cdot)$ the modulus operation. 
Let furthermore 
\begin{equation}\label{eq:homo}
\phi:(\mathbb{R},+)\to (\Omega,+_K)
\end{equation}
be the group homomorphism defined by $\phi(x)=(x)\mod (K)$.  Let $X(t)=\phi(Y(t))$ be a stochastic process on $\Omega$. If $Y(t)$ is a L\'evy process on the group $(\mathbb{R},+)$, the same applies to $X(t)$ with respect to the group $(\Omega,+_K)$.  Note that in the definition of L\'evy processes only the group structure of $(\mathbb{R},+)$ enters. L\'evy processes are naturally defined on locally compact Abelian groups like $(\mathbb{R},+)$ or also $(\Omega,+_K)$, see \cite{BF}. Let us now consider the characteristic function of the $\Omega$-valued process $X(t)$ at time $t$. By the periodicity of $\Omega$, only $k$ values from $\frac{2\pi}{K}\mathbb{Z}$ are needed. To derive a L\'evy - Kinchine formula (\ref{eqa:LevyFormula}) for $X(t)$ on $\Omega$ from that of $Y(t)$ on $\mathbb{R}$, we consider for such values of $k$
\begin{eqnarray}
\mathbb{E}\left[e^{iX(t)k} \right]&=&\mathbb{E}\left[e^{i\phi(Y(t))k} \right]\nonumber\\
&=&\mathbb{E}\left[e^{iY(t)k} \right] =e^{t\psi(k)}.
\end{eqnarray}
Inserting (\ref{eqa:LevyFormulaII}), we obtain
\begin{eqnarray}
\label{eqa:wrappedLevyMeasure}
\psi(k)&=& ibk-\frac{\sigma^2}{2}k^2+\int_{\mathbb{R}\setminus \{0\}}\left(e^{isk}-1\right)d\nu(s)\nonumber\\
&=& ibk-\frac{\sigma^2}{2}k^2+\int_{\mathbb{R}\setminus \{0\}}\left(e^{i\phi(s)k}-1\right)d\nu(s)\\
&=& ibk-\frac{\sigma^2}{2}k^2+\int_{\Omega\setminus \{0\}}\left(e^{isk}-1\right)d\phi_*\nu(s),\nonumber
\end{eqnarray}
with $\phi_*\nu$ the image measure on $\nu$ under $\phi$.   Note that under the hypothesis that $\nu$ has support in $[\Omega_a,\Omega_b]$, $\nu$ can be reconstructed from $\phi_*\nu$ as $\nu=\phi^{-1}_*\phi_*\nu$, where $\phi^{-1}:\Omega\to\mathbb{R}$ is the natural embedding. Using this, we identify $\nu$ and $\phi_*\nu$ in the following. 

Summing up, we consider the maximum likelihood parameter estimation problem with fixed $b,\sigma^2$, a piecewise linear density function with $N_\Theta$ grid points for the finite L\'evy measure and periodic boundary conditions on $\Omega$. By refining the grid for the linear interpolation, enlarging the size of the torus and letting the fixed diffusion go to zero, we can approximate the distribution of any L\'evy process with one of our candidate processes in the topology set by weak convergence in law. This constitutes the hierarchy of maximum likelihood estimation procedures.

\section{\label{sec:Kol}Kolmogorov Equations and Optimality for the Log-Likelihood}

In this section we formulate the optimization problem for the maximum likelihood parameters
estimation. The maximum likelihood estimator as an optimizing objective functional is given
together to the Kolmogorov forwards PIDE as constraints. The optimality system is written
by using the Lagrange multipliers method in a functional space, by also including the Karush-Kuhn-Tucker
conditions for the non negativity of the optimizing parameters.

\paragraph{Objective functional and forward equation.} Let the $L$ independent sample values $X_1,\ldots, X_L$ be given, and $X_l \in\Om, \quad l=1,\ldots,L$, where
$\Om=[\Om_a,\Om_b]$. These values can e.g. be obtained as $X_l=\phi(Y_l)$, where 
$\phi(\cdot)$ is the group homomorphism defined in Eq. (\ref{eq:homo}) and 
$Y_l$ is the L\'evy process on $\mathbb{R}$.
We deal with the problem to find the PDF  of $X(T)$ such that it
best fits with the sample values. For this purpose we consider the maximum likelihood problem in the framework of PIDE-constrained 
optimization: We have to find the maximum likelihood estimator 
\begin{equation}\label{eq:max_likeli}
 \max_{\bar\alpha} J(f,\bar\alpha),
\end{equation}
with respect to the parametrization of the measure given by $\bar\alpha$, where 
\begin{equation}\label{eq:functional}
 J(f,\bar\alpha) = \frac{1}{L} \sum_{l=1}^L \log(f(X_l,T,\bar\alpha)),
\end{equation}
is the (normalized) log-Likelihood with the constraint given by the following Kolmogorov forward (Fokker-Planck) equation for the L\'evy process $X(t)$ on the torus $\Omega$ with L\'evy data $(b,\sigma^2,\nu_{\bar \alpha})$ using the parametrization (\ref{eqa:LevyFormulaII}) and $d\nu_{\bar \alpha}(s)=\sum_{j=1}^{\Nth} \alpha_j\Theta_j(s)ds$:
\begin{equation}\label{eq:fp}
\left\{
\begin{array}{l}
\de_t f(x,t) + b \de_x f(x,t) -\frac{\sigma^2}{2}\de_x^2 f(x,t) -\int_{\Om}
\sum_{j=1}^{\Nth} \alpha_j (f(x+s,t)-f(x,t))\Th_j(s) ds  = 0  \\ \\
f(x,0)=f_0(x)   \\ \\
f(\Om_a,t) = f(\Om_b,t), \qquad \de_x f(\Om_a,t) = \de_x f(\Om_b,t), \\
\end{array}\right.
\end{equation}
where $f(x,t)$ represent the PDF of the process at time $t$.
This PIDE is defined in the interval of time $t\in [0,T]$, and with periodic boundary 
conditions on $\Om_a = \Om_b$. 
Here $\Theta_j(s)$ is a set of triangular shaped basis for the set of continuous functions that are linear on $(\theta_{j-1},\theta_j)$, see the preceding section,
\begin{align}
 \Th_j(s)=1+(s-\theta_j)/\De \qquad  s\in [\theta_j - \De, \theta_j]  \nonumber \\
 \Th_j(s)=1-(s-\theta_j)/\De \qquad  s\in [\theta_j, \theta_j+\De], \nonumber
\end{align}
where $\theta_j = \Om_a + \De (j-1)$ for $j=1,\ldots \Nth+1$, are the points of a discrete
uniform mesh of step size $\De=(\Om_b-\Om_a)/\Nth$ defined on the domain.
The periodicity $\Th_1(s)\equiv\Th_{\Nth+1}(s)$ is assumed.

The existence and uniqueness of the solution of the Fokker-Planck equation (\ref{eq:fp}) is well established, 
also for initial conditions belonging to the class of measures \cite{BF}.

\paragraph{The optimality system.} If we write the mapping $\bar\alpha\rightarrow f(\bar\alpha)$
between the maximization parameters and the PDF, then we introduce the so-called
reduced cost functional $\hat J(\bar\alpha)=J(f(\bar\alpha),\bar\alpha)$, so that the
maximization problem becomes
\begin{equation}\label{eq:max_reduced}
\max_{\bar\alpha} \hat J(\bar\alpha) =  \max_{\bar\alpha} J(f(\bar\alpha),\bar\alpha)
\end{equation}
A local maxima $\bar\alpha^*$ for $\hat J$ can be found by solving
the optimality system obtained by vanishing the variations of the following Lagrangian functional
\begin{align}\label{eq:Lagr}
 \eL(f,p,\bar\alpha,\bar \pi)  = & \frac{1}{L} \sum_{l=1}^L \log(f(X_l,T,\bar\alpha))
+ \intt \!\! \intx \bigg[ \de_t f(x,t) + b \de_x f(x,t) - \half{\sigs} \de_x^2 f(x,t) \nonumber \\
&  - \sum_{j=1}^{N_\Theta} \alpha_j \intx (f(x+s,t)-f(x,t))\Th_j(s) ds \bigg] p(x,t)  dx\,dt-\sum_{j=1}^{N_\Theta}\pi_j\alpha_j,
\end{align}
where $\bar\pi=(\pi_1,\ldots,\pi_{N_\Theta})$ and $\bar \alpha$ fulfil the usual Karush-Kuhn-Tucker (KKT) conditions $\pi_j\alpha_j=0$ and $\pi_j\geq 0$.
These are important to include the non-negativity constraints for the control variables. Note that if the condition
$\alpha_j \geq 0$ is violated for some $j\in\{1,\ldots,N_{\Theta}\}$, the density of the measure $d\nu_{\bar\alpha}(s)$ is negative in a neighbourhood of $\theta_j$ and thus is not a L\'evy measure any more. The sum $\sum_{j=1}^{N_\Theta}\pi_j\alpha_j$ should be 
extended only on the active constraints, i.e. when $\alpha^*_{j'} = 0$. For those
values of $j$ on the maximum where $\alpha^*_{j}>0$ we have $\pi^*_{j}=0$.

First we calculate the variation $\eL(f+\dl f)-\eL(f)$ for the adjoint equation.
In the following the variations are calculated separately for each addend
of the r.h.s. We get
\begin{align}
- \sum_{j=1}^{\Nth} \alpha_j \intt\intx\intx (-\dl f(x,t))\Th_j(s)p(x,t) ds\, dx\,dt \nonumber \\
 =\sum_{j=1}^{\Nth} \alpha_j \intt\intx \Big( \intx \Th_j(s) ds\Big) \dl f(x,t) p(x,t)\,dx\,dt.
\end{align}
For the term 
$- \sum_j \alpha_j \intt\intx\big(\intx \dl f(x+s,t)\Th_j(s) ds\big) p(x,t)\, dx\,dt$
we apply the substitution $y=x+s$, then exchange $x \leftrightarrow y$,
so that it recasts to $- \sum_j \alpha_j \intt\intx\big(\intx p(y,t) \Th_j(x-y) dy\big) 
\dl f(x,t) dx\,dt$. Then, again, we substitute $s=x-y$ and, by inserting also the former term, we get
\begin{equation}
- \intt\!\!\intx \Big[ \sum_{j=1}^{\Nth} \alpha_j \intx ( p(x-s,t) -p(x)) \Th_j(s) ds \Big]
\dl f(x,t) dx\,dt.
\end{equation}
For the variation of the time derivative, integrating by parts, one obtains
\begin{equation}
\intx \dl f(x,t)\, p(x,t)|_0^T dx -\intt \de_t p(x,t) \, \dl f(x,t) dx dt.
\end{equation}
The variation
$\dl f(x,0)=0$ holds because of the Cauchy initial condition, while the variation in $T$ can be
defined in some points $X_l$.
Next, we integrate by parts the term with the first order derivative in $x$ and obtain
$ b \intt ( \dl f(x,t) p(x,t))|_{\Om_a}^{\Om_b} dt - b\intx \de_x p \,\dl f dx\, dt$.
Due to  periodicity in the first term, $\dl f(\Om_a,t)(p(\Om_b,t)-p(\Om_a,t))$
has to be zero, hence $p(\Om_b,t)=p(\Om_a,t)$.

From the diffusive term we get
 \begin{equation}
 -\half{\sigs} \intt\!\!\intx \de_x^2 \dl f p(x,t) dx dt=
-\half{\sigs} \intt [\de_x \dl f p - \dl f \de_x p]_{\Om_a}^{\Om_b} dt +
\intx \de_x^2 p(x,t) dx dt.
\end{equation}
The first boundary term is zero because of 
the periodic condition of the variation of the derivative of $f$ at the boundaries, and because of 
the previous periodic condition on $p$. The second is analogous and
has to vanish, we therefore get the continuity condition $\de_x p(\Om_a,t)=\de_x p(\Om_b,t)$.

By collecting all the terms under double integral, we get the adjoint equation.
The remaining boundary term $\intx \dl f(x,T) p(x,T) dx$ will be considered below.

To calculate the variation on $f$ in the functional $J$ we perform an additional
integration in space, so that
\begin{equation}\label{eq:int_func}
\dfrac{1}{L} \sum_{l=1}^L \intx \log(f(x,T)) \dl(x-X_l) dx,
\end{equation}
where $\dl(.)$ is the $\dl-$Dirac measure, then variate $f(x,T)+\dl f(x,T)$, hence 
\begin{align}
\intx \log(f(x,T)+\dl f(x,T))\dl(x-X_l) dx \nonumber\\
= \intx (\log(f(x,T))+\dl f(x,T) / f(x,T))\dl(x-X_l) dx,
\end{align}
 so that the first order terms plus the remaining boundary, give
\begin{equation}
\dfrac{1}{L} \sum_{l=1}^L \intx \dfrac{\dl f(x,T)}{f(x,T)}\dl(x-X_l) dx +
\intx p(x,T) \dl f(x,T) dx.
\end{equation}
This expression have to be zero for each $\dl f(x,T)$. It represents the terminal
condition for the adjoint equation: that is $p(X_l,T)=-1/(L f(x_l,T))$, and
 $p(x,T)=0,$ if $x\neq \{X_1,\ldots,X_L\}$. In case of multiplicity of
 $X_l$ the condition becomes $p(X_l,T)=-1/(L \sum_{l'} f(X_{l'},T))$, with $l'$ running on
the multiplicity value.

Summarizing, the adjoint equation (Kolmogorov's backward equation) is as follows:
\begin{equation}\label{eq:adj}
\left\{
\begin{array}{l}
-\de_t p(x,t) - b\, \de_x p(x,t) -\dfrac{\sigma^2}{2}\de_x^2 p(x,t) -\int_{\Om}
\sum_{j=1}^{\Nth} \alpha_j (p(x-s,t)-p(x,t))\Theta_j(s) ds  = 0  \\
\\
p(x,T)=-\frac{1}{L}\sum_{l=1}^L \delta(x-X_l)/f(X_l,T) \\
\\
p(\Om_a,t)=p(\Om_b,t),\quad  \de_x p(\Om_a,t)=\de_x p(\Om_b,t).
\end{array}\right.
\end{equation}

We note that by reverting the sign of the time we get the same PIDE as the
forward equation (up to a reflection of the drift and jump direction), hence  this equation has a unique solution, also for the non regular final value problem \cite{BF}.

Second, we variate in Eq.(\ref{eq:Lagr}) the fitting parameters $\eL(\alpha_j+\dl \alpha_j)-\eL(\alpha_j)$,
from which we found the optimality equations:
\begin{equation}\label{eq:opt}
-\pi_{j'} -\int_0^T \!\! \int_\Om \int_\Om (f(x+s,t)-f(x,t)) p(x,t) \Theta_j(s)\, ds\, dx\, dt = 0, \qquad j=1,\ldots,\Nth,
\end{equation}
where $j'$ runs on the set of values where $\alpha^*_{j'}=0$. Note that the active $\pi_{j'}$ do not change the gradient, but simply balance non-zero gradient components that point to the directions where the inequality constraint $\alpha_{j'}\geq 0$ is violated. As in our case we deal with simple box-constraints on the $\alpha_j$ themselves, we can set those components of the negative gradient equal to zero that correspond to an active index $j'$ and are negative, when determining the update. This then accounts for the effect of the $\pi_{j'}$, see e.g. \cite{Fri,nocedal}.

The $1^{\mbox{st}}$ order necessary optimality system consists of the Eqs. (\ref{eq:fp}), (\ref{eq:adj}) and (\ref{eq:opt}).
Its solution gives values $\alpha_1^*, \ldots, \alpha_{\Nth}^*$ that are candidates for maximizing the functional (\ref{eq:functional}). Note that maximum likelihood fits in most cases do not correspond to convex optimization problems 
and one always has to account for the perils of local minima that are sub-optimal globally.

\paragraph{Forward equation in flux form.} The forward equation (\ref{eq:fp}), 
can be written in flux form:
$\de_t f(x,t) = \de_x \eF(x,t)$, where $\eF(x,t)$ is the flux defined as
\begin{equation}\label{eq:flux}
\eF(x,t) = - b f(x,t) + \frac{\sigma^2}{2}\de_x f(x,t) + \sum_{j=1}^{\Nth} \alpha_j\int_{\Om}
\Big(\int_{-s}^0 f(x-y,t) dy\Big)  \Theta_j(s) ds .
\end{equation}
By using 
$\de_x \int_{-s}^0 f(x-y) dy = \int_{-s}^0 f'(x-y) dy = \int_x^{x+s} f'(z) dz = f(x+s)-f(x)$,
it is easy to verify that Eq. (\ref{eq:flux}) is equivalent to Eq. (\ref{eq:fp}).
Further, from the conservation of the total probability, it follows that the flux has the periodic boundary 
condition $\eF(\Om_a,t)=\eF(\Om_b,t)$. From this we immediately  get the periodic condition on the 
first derivative $\de_x f(\Om_a,t) = \de_x f(\Om_b,t)$.

\section{\label{sec:Num} Numerical Scheme}

The numerical solution of the optimality system is found by a non linear gradient 
conjugate iterative procedure \cite{gilbert,shanno,mar:alf}. 
At each iteration the solution of two PIDEs, the forward
and the adjoint one, must be found. In particular the structural properties of the PDF
solution must be satisfied, as well as a stability condition of the PIDEs numerical scheme solver.

For the numerical discretization of the Kolmogorov forward equation we use the Chang-Cooper
scheme (CC) \cite{chacoo}, joint to a 2nd order backward differentiation formula
(BDF2) for the discrete time operator.
The CC method was proposed for a Fokker-Planck resp.\ Kolmogorov equation \cite{mar:alf}
without the integral term. It is  stable, second-order accurate, non-negative, and conservative numerical scheme  \cite{mar:alf,bor:moh}.

The CC method is used for the differential operators, the integral term is treated separately according to an IMEX methodology.
We denote the following $B=-b$ and $C={\sigma^2}/{2}$, then the Kolmogorov forward
equation reads as follows
\begin{equation}\label{eq:forw}
\de_t f(x,t) = \de_x F(x,t) + \int_{\Om}
\sum_{j=1}^{\Nth} \alpha_j (f(x+s,t)-f(x,t))\Th_j(s) ds,
\end{equation}
where
\begin{equation}
 \label{eq:def_flux}
F(x,t) = B\, f(x,t) + C \de_x f(x,t).
\end{equation}
Consider a uniform grid of size $h$ on the space domain $\{ \Omega_h \}_{h>0}$ given by
$\Omega_h = \{ x \in \R : x_i = i\,h +\Om_a, i=0,\ldots,N, h=(\Om_b-\Om_a)/N \}$ and
a uniform grid on the time domain
$I_{\delta t} = \{ t \in [0,T] : t_m = m\,\delta t ,   m=0,\ldots,N_T, \dl t=T/N_T\}$. Let
$f_i^m \approx f(x_i,t_m)$ denote the approximated values of the continuous solution of the FPE.
We employ the following discretization of (\ref{eq:flux})
\begin{equation}
\label{eq:flux_eq}
\partial_{BD}^- f^{m+1}_i = \frac{1}{h}  (F_{i+1/2}^{m+1} - F_{i-1/2}^{m+1}) + Q( f^m_i;\bar\alpha),
\end{equation}
where
\begin{equation}
\partial_{BD}^- f^m_i = \dfrac{3 f^m_i -4 f^{m-1}_i + f^{m-2}_i}{2 \delta t},
\end{equation}
is the BDF2 operator.
$Q(f^m_i;\bar\alpha)$ is the sum of the integrals of Eq. (\ref{eq:flux}) calculated with 
the mid-point scheme
\begin{equation}\label{eq:defQ}
Q(f^m_i;\bar\alpha) = h \sum_{j=1}^{\Nth} \alpha_j \sum_{k=1}^N \hat f_{ik}^m \theta_{jk} - a f_i^m,
\end{equation}
where $\bar\alpha=(\alpha_1,\ldots,\alpha_{\Nth})$, $\hat f_{ik}^m \approx ( f(x_i+s_k,t_m) )_{\Omega_h}$
represents the translated $f_{i}^m$ by the value $s_k\in\Omega_h$ and continued by periodicity, 
$\theta_{jk} = \Theta_j(s_k)$,
\begin{equation}\label{eq:const_a}
a = h \sum_{j=1}^{\Nth} \alpha_j \sum_{k=1}^N \theta_{jk}
\approx \sum_{j=1}^{\Nth} \alpha_j \int_{\Om} \Th_j(s) \, ds.
\end{equation}
Note also that the summation starts 
from $k=1$, because the point $k=0$ is the same of that $k=N$.
Therefore, the solution at a new time step is calculated by solving the 
following equation for the unknown $f^{m+1}_i$
\begin{equation}
\label{eq:disc_fp}
3f^{m+1}_i - \frac{2\dt}{h}  (F_{i+1/2}^{m+1} - F_{i-1/2}^{m+1}) = 4f^{m}_i - f^{m-1}_i 
+ \dt\, Q(f^m_i;\bar\alpha), 
 \end{equation}
with the initial condition 
\begin{equation}\label{eq:disc_init}
  f^{0}_i=f_{0,i} .
\end{equation}

This scheme is based on the fluxes at $N$ cell boundaries. 
The partial flux  at the position $x_{i+h/2}$ is computed as follows
\begin{equation}
\label{eq:CCflux}
 F_{i+1/2}^{m+1} = \left[ (1-\delta) B + \frac{1}{h} C\right] \, 
f^{m+1}_{i+1} - \left(\frac{1}{h} C - \delta B \right)\, f^{m+1}_{i} .
\end{equation}

This formula results from the following linear convex combination of $f$ at the 
points $i$ and ${i+1}$:
\begin{equation}
f^{m+1}_{i+1/2} =(1-\delta)\, f^{m+1}_{i+1} + \delta \, f^{m+1}_{i}, \qquad \delta \in [0,1].
\end{equation}
The idea of implementing this combination was proposed by Chang and Cooper in \cite{chacoo} and it was 
motivated with the need to guarantee positive solutions that preserve the equilibrium configuration. Indeed, the CC method is related to exponential fitting methods, such as that one proposed 
by Allen and Southwell \cite{all:sou}, and by the  Scharfetter-Gummel discretization scheme
\cite{sch:gum}.
The value of the parameter $\delta$ is $\delta = 1/w - 1/(\exp(w)-1)$,
where $w = h \, B/C$, which can be shown to be 
monotonically decreasing from $1$ to $0$ as $w$ goes from $-\infty$ to $\infty$.
Notice that with the choice of $\delta$ given above, the numerical 
scheme shares the same properties of the continuous FP equation that 
guarantee positiveness and conservativeness. This is a special case of the CC scheme
because in the general one, the functions $B$ and $C$ may depends on $(x,t)$, hence also 
$\delta$ may depend on $(x,t)$, too. Both the CC scheme \cite{bor:moh} and the mid-point are second order
accurate, then  a second order numerical scheme results.

Let $f^m=(f^m_1,\ldots, f^m_{N})^\dag$ be the discrete solution at the time $t_m$,
with $f_0^m$ omitted due to periodicity,
and $\beta=C/h-\delta B$. The action of the finite difference operator for $F^m$ in Eq. (\ref{eq:flux_eq}) reads as matrix $A$ whose elements are defined by
\begin{equation}\label{eq:diff_flux_matrix}
A_{i,i}=-\beta(1+\omega)/h, \quad A_{i, i-1}=\beta/h, \quad A_{i, i+1}=\omega\beta/h, 
\quad A_{1,N}=\beta/h, \quad A_{N,1}=\omega\beta/h,
\end{equation}
where $\beta=B/(\omega-1)$, $\omega=\exp(hB/C)$. Hence,
$  A f^m := (F_{i+1/2}^m-F_{i-1/2}^m)/h$, and then the
Eq. (\ref{eq:disc_fp}) can be written in matrix form, as follows
\begin{eqnarray}\label{eq:discFP_matrix}
& M f^{m+1}  & = 4f^{m} - f^{m-1} + \dt\, Q(f^m;\bar\alpha), \\
& \mbox{where  } M &: = 3I-2 \,\delta t\,A
\end{eqnarray}
is the matrix coefficients related to Eqs. (\ref{eq:disc_fp}) and 
(\ref{eq:CCflux}).
We note that this method needs of a second starting point, that can be calculated by using a first order Euler scheme with a smaller time step size than $\delta t$.

The implicit Euler scheme for the Eqs. (\ref{eq:forw}) and (\ref{eq:CCflux}) is
\begin{equation}\label{eq:Euler}
(I-\delta t A)f^m=f^{m-1}+ \dt\, Q(f^{m-1};\bar\alpha).
\end{equation}

These two numerical schemes own some properties that can be easily
proved, but we list here as remarks. 
\begin{rem} \label{rem:eul_cons}
The Euler-CC scheme (\ref{eq:Euler}) to Eqs. (\ref{eq:forw}) and (\ref{eq:CCflux}),
defined in the periodic domain $\Omega_h$, is conservative.

In fact, $\sum_{i=1}^N A_{i,j}=0,\; \forall j$, and $\sum_{i=1}^N Q(f_i^m,\bar\alpha)=0$ because the set of values of $f_i^m$
 are the same as $\hat f_{ik}^m$, being the last only translated by $k$.
Hence, $\sum_{i=1} f^m_i = \sum_{i=1} f^{m-1}_i $. 
\end{rem}

\begin{rem}\label{rem:cons_bdf}
Provided that   $\sum_{i=1} f^m_i = \sum_{i=1} f^{m-1}_i $, then
the BDF2-CC scheme (\ref{eq:discFP_matrix}) to Eqs. (\ref{eq:forw}) and (\ref{eq:CCflux}),
defined on the periodic domain $\Omega_h$, is conservative. 
In fact for the same constraints on $A$ and $Q$ as above, we get the identity
$3 \sum_{i=1} f^{m+1}_i = 4 \sum_{i=1} f^{m}_i - \sum_{i=1} f^{m-1}_i
= 3 \sum_{i=1} f^{m}_i$.
\end{rem}
The positivity of the numerical scheme is proved by using the theorem
for the class of $M$-matrix \cite{ple}. Given a positive matrix $E$, $E_{ij}\geq 0$,
we say that $M=sI-E$ is a non singular $M$-matrix if $s>\rho(E)$, where 
$\rho(E)$ is the spectral radius of $E$. 
A non singular $M$-matrix has the important property 
\begin{equation}\label{eq:mmatrix}
\mbox{$M$ is non singular $M$-matrix} \Rightarrow M^{-1}\geq 0.
\end{equation}
\begin{thm}\label{th:pos_eul}
Let $\delta t \leq 1/a$, with $a$ defined in (\ref{eq:const_a}), then the Euler scheme (\ref{eq:Euler}) to Eq. (\ref{eq:forw}), defined in the periodic domain $\Omega_h$, is positive preserving.
\end{thm}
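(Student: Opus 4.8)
The plan is to write the scheme in solved form $f^m=(I-\dt A)^{-1}\big(f^{m-1}+\dt\,Q(f^{m-1};\bar\alpha)\big)$ and to show separately that the inverse matrix is entrywise non-negative and that the bracketed right-hand side is non-negative whenever $f^{m-1}\geq 0$. Positivity of $f^m$ then follows as the product of a non-negative matrix with a non-negative vector, and induction on $m$ propagates it from non-negative initial data.

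First I would establish that $I-\dt A$ is a non-singular $M$-matrix. The preliminary observation is that $\beta=B/(\omega-1)>0$ for every sign of $B$ (with limiting value $C/h$ as $B\to 0$, consistent with the alternative expression $\beta=C/h-\delta B$), so by (\ref{eq:diff_flux_matrix}) all off-diagonal entries of $A$ are non-negative while the diagonal $-\beta(1+\omega)/h$ is negative and constant across rows. Hence $I-\dt A$ is a $Z$-matrix with strictly positive diagonal. I would then split off the diagonal as $I-\dt A=sI-E$ with $s=1+\dt\,\beta(1+\omega)/h$ and $E=\dt\,A_{\mathrm{off}}\geq 0$. Since $A$ has zero row sums---each row's two off-diagonal entries, including the wrapped entries $A_{1,N}$ and $A_{N,1}$, add to $\beta(1+\omega)/h$---every row sum of $E$ equals $s-1$, so the all-ones vector is a Perron eigenvector and $\rho(E)=s-1<s$. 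By the $M$-matrix property (\ref{eq:mmatrix}) this gives $(I-\dt A)^{-1}\geq 0$. I would stress that this step holds for any $\dt>0$.

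Next I would verify non-negativity of the right-hand side. Using (\ref{eq:defQ}) and the constant $a$ of (\ref{eq:const_a}), the $i$-th component equals $(1-\dt\,a)\,f^{m-1}_i+\dt\,h\sum_{j}\alpha_j\sum_k \hat f^{m-1}_{ik}\,\theta_{jk}$. The double sum is non-negative because $\alpha_j\geq 0$ (the KKT constraint ensuring $d\nu_{\bar\alpha}$ remains a L\'evy measure), $\theta_{jk}=\Th_j(s_k)\geq 0$, and the $\hat f^{m-1}_{ik}$ are periodic translates of the non-negative vector $f^{m-1}$. The first term is non-negative exactly when $1-\dt\,a\geq 0$, i.e.\ under the hypothesis $\dt\leq 1/a$. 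This is the sole point where the time-step restriction is used, and it isolates the role of the explicitly treated integral operator.

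Combining the two factors closes the argument. I expect the only genuinely delicate points to be the uniform sign analysis of $\beta$ and the spectral-radius identity $\rho(E)=s-1$; everything else is bookkeeping with the circulant-type structure of $A$. An alternative to the explicit $sI-E$ splitting would be to invoke a weak-diagonal-dominance criterion for $Z$-matrices directly, but the equal-row-sum computation above is the cleaner route, as it makes the strict inequality $s>\rho(E)$ transparent.
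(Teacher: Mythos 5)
Your proposal is correct and follows essentially the same route as the paper: both isolate the non-negative translation operator (the paper's $R$) so that the right-hand side becomes $(1-a\,\delta t)f^{m-1}+\delta t\,R f^{m-1}$, non-negative precisely under $\delta t\leq 1/a$, and both conclude via the $M$-matrix property that $(I-\delta t\,A)^{-1}\geq 0$. The only difference is cosmetic: the paper justifies the $M$-matrix claim through diagonal dominance and a convergent regular splitting, whereas you compute $\rho(E)=s-1$ directly from the zero row sums of $A$; this is a more explicit justification of the same step, not a different argument.
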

\begin{proof} The argument is as follows: let $R$ the matrix operator such that\\ 
$R f^m = h\sum_{j=1}^N \alpha_j \sum_{k=1}^N \theta_{jk} \hat f_{ik}^m$. Such a matrix
is non negative because $\alpha_j$ and $\theta_{jk}$ are.
The numerical scheme (\ref{eq:Euler}) can be recast as
\begin{equation}\label{eq:eul_proof}
\left(\left(1+\frac{\delta t \beta}{h}(1+\omega)\right) I-\delta t \tilde A\right)f^m=(1-a\,\delta t)f^{m-1}+\delta t R f^{m-1},
\end{equation}
where $\tilde A=A-\mbox{diag}(A)$ is a positive matrix. 
Provided that $f^{m-1}\geq 0$ and $ \delta t \leq 1/a$ the r.h.s. is a non negative vector.
We observe that the matrix on the l.h.s is always diagonal dominant, hence it has a convergent regular splitting and consequently is an $M$-matrix \cite{ple}.
Therefore, $(I-\delta t A)^{-1}$ is non negative and $f^m$ will be too. 
\end{proof}

In order to prove the positivity of the BDF2 numerical scheme (\ref{eq:discFP_matrix}),
we need of the following Lemma that gives a lower bound to the velocity of decreasing of the solution.
\begin{lem}\label{lem:eul}
Let the vector $f^m \in \R^N$ be given non negative. Take a number
$\xi>1$, then the solution $f^{m+q}$ calculated with the Euler scheme of Eq. (\ref{eq:Euler}) after $q$ time steps satisfies the following inequality
$$
f^{m+q} \geq f^m/\xi^{q},
$$ 
provided that $\dt < \dfrac{\xi-1}{a \xi+\beta(1+\omega)/h}$, with 
parameters defined in Eq. (\ref{eq:diff_flux_matrix}).
\end{lem}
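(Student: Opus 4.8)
The plan is to reduce the $q$-step estimate to a single-step inequality and then iterate. First I would establish the one-step claim: whenever $f^{n}\ge 0$, the Euler iterate satisfies $f^{n+1}\ge f^{n}/\xi$ under the stated bound on $\dt$. To this end I reuse the recast form (\ref{eq:eul_proof}) of the scheme (\ref{eq:Euler}), writing the left-hand side matrix as $P:=\big(1+\tfrac{\dt\,\beta}{h}(1+\omega)\big)I-\dt\,\tilde A$, where $\tilde A=A-\mathrm{diag}(A)\ge 0$ and the jump matrix $R\ge 0$ are the non-negative matrices already isolated in the proof of Theorem \ref{th:pos_eul}, so that $P f^{n+1}=(1-a\,\dt)f^{n}+\dt\,R f^{n}$. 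I would recall that, exactly as in that proof, $P$ is diagonally dominant and hence an $M$-matrix, so that $P^{-1}\ge 0$.

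Next I would test the sign of $g:=f^{n+1}-f^{n}/\xi$ through $P$. Using the scheme for $Pf^{n+1}$ and the bare definition of $P$ for $Pf^{n}$, a direct computation gives
\[
P g=\kappa\, f^{n}+\dt\,R f^{n}+\frac{\dt}{\xi}\tilde A f^{n},\qquad
\kappa:=(1-a\,\dt)-\frac{1}{\xi}\Big(1+\frac{\dt\,\beta(1+\omega)}{h}\Big).
\]
The last two terms are manifestly non-negative because $R,\tilde A\ge 0$ and $f^{n}\ge 0$, so the entire right-hand side is non-negative as soon as the scalar $\kappa\ge 0$. Rearranging $\kappa\ge 0$ yields precisely $\dt\le(\xi-1)/(a\xi+\beta(1+\omega)/h)$, which is the hypothesis. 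Then $Pg\ge 0$ together with $P^{-1}\ge 0$ gives $g=P^{-1}(Pg)\ge 0$, i.e. $f^{n+1}\ge f^{n}/\xi$.

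To conclude I would check that the iterates stay non-negative so the one-step claim may be applied repeatedly: since $a>0$ and $\beta(1+\omega)/h>0$ (note $\beta=B/(\omega-1)>0$ and $\omega>0$ for every sign of $B$), the hypothesised bound is strictly smaller than $1/a$, so Theorem \ref{th:pos_eul} guarantees $f^{m+i}\ge 0$ for every $i$. Finally I would run a straightforward induction on $q$: the base case $q=0$ is the identity $f^{m}\ge f^{m}$, and if $f^{m+q}\ge f^{m}/\xi^{q}$ then the one-step inequality applied at step $m+q$ gives $f^{m+q+1}\ge f^{m+q}/\xi\ge f^{m}/\xi^{q+1}$. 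The only genuinely delicate point is the algebraic bookkeeping that isolates all potentially negative contributions into the single scalar coefficient $\kappa$; once the splitting $A=\tilde A+\mathrm{diag}(A)$ is used and $R,\tilde A$ are recognised as non-negative, matching $\kappa\ge 0$ to the stated $\dt$-bound is routine.
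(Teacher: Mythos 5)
Your proposal is correct and follows essentially the same route as the paper: you test the sign of $\xi f^{n+1}-f^{n}$ (you use the rescaled $g=f^{n+1}-f^{n}/\xi$) through the operator $I-\dt A$, split $A$ into $\tilde A+\mathrm{diag}(A)$ and $Q$ into $R-aI$, observe that the only potentially negative contribution is the scalar coefficient whose non-negativity is exactly the stated bound on $\dt$, and conclude via the $M$-matrix property $(I-\dt A)^{-1}\geq 0$. Your explicit induction on $q$ and the check that the bound is stricter than $1/a$ (so Theorem \ref{th:pos_eul} keeps the iterates non-negative) merely spell out what the paper compresses into ``by iterating that inequality $q$ times'' and Remark \ref{rem:bound_dt_eul}.
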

\begin{proof} A proof is given for a particular case in \cite{bor:moh} (see also Refs. therein).
Here we prove it as follows.
Given $f^m$ and $f^{m+1}$ calculated with (\ref{eq:Euler}), let define 
$v=\xi f^{m+1}-f^m$. By applying the operator $I-\dt A$, we get
$(I-\dt A) v = (\xi I-(I-\dt A))f^m + \dt \xi Q(f^m;\bar\alpha)$, i.e.
$$ 
(I-\dt A) v= ((\xi-1-\dt (\beta(1+\omega)/h+a\xi))I+\dt\tilde{A}+\dt\xi R)f^m,
$$
where $\tilde A=A-\mbox{diag}(A)$ is a positive matrix. Now provided the bound 
for $\dt$, then the r.h.s. is positive and from Th. \ref{th:pos_eul} we get that
$v\geq 0$. By iterating that inequality $q$ times, we get the thesis. 
\end{proof}

\begin{rem}\label{rem:bound_dt_eul}
The upper bound on $\dt$ in Lemma \ref{lem:eul} results to be $\dt < 1/a$ for $\xi>1$,
hence the condition on the Lemma is stricter than those on non negativity of
Thm. \ref{th:pos_eul}.
\end{rem}

Now we show a Lemma similar to Lemma \ref{lem:eul} valid for the BDF2 scheme.

\begin{lem}\label{lem:bdf2}
Let $1<\xi<3$ and $\dt\leq h(\xi-1)(3-\xi)/(a\xi h+2\beta(1+\omega))$ be the time
step size of the numerical scheme of Eq. (\ref{eq:discFP_matrix}) that generates
the sequence of vectors $f^m$ for $m=2,3,...$ from the starting vectors $f^0,f^1$.
If there exists $m^*$ such that $\xi f^{m^*+1}-f^{m^*}\geq 0$ and $f^{m^*}\geq 0$, then 
$\xi f^{m+1}-f^{m}\geq 0$ for all $m>m^*$.
\end{lem}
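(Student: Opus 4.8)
The plan is to follow the strategy of Lemma \ref{lem:eul}, adapted to the three–term BDF2 recursion, and to run a single induction on $m$ that simultaneously propagates the two properties $\xi f^{m+1}-f^m\geq 0$ and $f^m\geq 0$. Throughout I write $v^m:=\xi f^{m+1}-f^m$. I recall from the proof of Theorem \ref{th:pos_eul} that $Q(f^m;\bar\alpha)=(R-aI)f^m$ with $R\geq 0$ non negative, and I split $A=-\frac{\beta(1+\omega)}{h}I+\tilde A$, where $\tilde A=A-\mbox{diag}(A)\geq 0$, so that $M=3I-2\dt A=\big(3+\tfrac{2\dt\beta(1+\omega)}{h}\big)I-2\dt\tilde A$. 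First I would apply $M$ to $v^m$: using the scheme (\ref{eq:discFP_matrix}) to replace $Mf^{m+1}$ by $4f^m-f^{m-1}+\dt(R-aI)f^m$, and the splitting above for $Mf^m$, I obtain $Mv^m$ as a scalar multiple of $f^m$, plus $\xi\dt R f^m+2\dt\tilde A f^m$, plus the single "bad" term $-\xi f^{m-1}$.

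The decisive algebraic step is then to substitute $f^{m-1}=\xi f^m-v^{m-1}$, which converts $-\xi f^{m-1}$ into $+\xi v^{m-1}$ (a favourable sign) at the cost of an extra $-\xi^2 f^m$ contribution. Collecting the scalar coefficient of $f^m$ and using $-\xi^2+4\xi-3=-(\xi-1)(\xi-3)=(\xi-1)(3-\xi)$, I expect to reach
\[
Mv^m=c\,f^m+\xi\,\dt\,R f^m+2\dt\,\tilde A f^m+\xi v^{m-1},\qquad c:=(\xi-1)(3-\xi)-\dt\Big(\xi a+\tfrac{2\beta(1+\omega)}{h}\Big).
\]

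The crux is that the stated bound on $\dt$ is exactly the condition $c\geq 0$: indeed $\dt\leq h(\xi-1)(3-\xi)/(a\xi h+2\beta(1+\omega))$ rearranges precisely to $(\xi-1)(3-\xi)\geq\dt(\xi a+2\beta(1+\omega)/h)$, and the hypothesis $1<\xi<3$ makes $(\xi-1)(3-\xi)>0$, so the bound is non-vacuous. Granting $c\geq 0$, together with $R,\tilde A\geq 0$ and $\dt,\xi>0$, the right–hand side above is a non negative vector as soon as $f^m\geq 0$ and $v^{m-1}\geq 0$. Since $M$ is strictly diagonally dominant with positive diagonal and non positive off–diagonal entries (the same reasoning used in Theorem \ref{th:pos_eul}, now applied to $3I-2\dt A$ in place of $I-\dt A$), it is a non singular $M$-matrix, whence $M^{-1}\geq 0$ by (\ref{eq:mmatrix}) and therefore $v^m\geq 0$.

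Finally I would close the induction on the pair of properties. The base case $m=m^*$ is the hypothesis $v^{m^*}\geq 0$ and $f^{m^*}\geq 0$. For the step, assuming $v^{m-1}\geq 0$ and $f^{m-1}\geq 0$, I first note $\xi f^m=v^{m-1}+f^{m-1}\geq 0$, hence $f^m\geq 0$; the displayed identity then gives $v^m\geq 0$, so both properties persist for every $m\geq m^*$, yielding $v^m=\xi f^{m+1}-f^m\geq 0$ for all $m>m^*$ as claimed. I expect the only delicate point to be the bookkeeping in collecting the $f^m$-coefficient and verifying that the $\dt$-bound coincides exactly with $c\geq 0$; the substitution that tames the three–term recursion is the conceptual heart, while the $M$-matrix inversion and the induction itself are routine and parallel Lemma \ref{lem:eul}.
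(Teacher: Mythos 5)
Your proof is correct and follows essentially the same route as the paper: apply $M=3I-2\dt A$ to $v=\xi f^{m+1}-f^{m}$, use the scheme and the splitting $A=-\tfrac{\beta(1+\omega)}{h}I+\tilde A$, absorb the $-\xi f^{m-1}$ term via $\xi f^{m}-f^{m-1}=v^{m-1}\geq 0$ (the paper writes this equivalently as requiring the coefficient of $f^{m+1}$ to exceed $\xi^2$), and invoke the $M$-matrix property of $M$ to conclude by induction. Your version merely makes the induction on the pair $(f^m\geq 0,\ v^m\geq 0)$ and the coefficient bookkeeping more explicit than the paper does.
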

\begin{proof}
We apply the operator $(3I-2\dt A)$ to $v=\xi f^{m+2}-f^{m+1}$,
$$
(3I-2\dt A)v=\xi (3I-2\dt A)f^{m+2}-(3I-2\dt A)f^{m+1}
$$
and use Eq. (\ref{eq:discFP_matrix}) to the first term on the r.h.s.\ to get
$$
(3I-2\dt A)v=[4\xi-3-\dt(a\xi+2\beta(1+\omega)/h)]f^{m+1} -\xi f^m + \dt(2\tilde A +\xi R)f^{m+1},
$$
where $\tilde A=A-\mbox{diag}(A)$ is a positive matrix. 
We know that $(3I-2\dt A)$ is an $M$-matrix and its inverse is always non-negative. Also
$2\tilde A +\xi R$ is non negative. Hence, we can prove non negativity of $v$,
provided that 
$$
4\xi-3-\dt(a\xi+2\beta(1+\omega)/h)\geq \xi^2,
$$
for a value $m=m^*$, because of the hypothesis $\xi f^{m^*+1}-f^{m^*}\geq 0$, that
also states that $f^{m^*+1}\geq 0$.
The last inequality is just the bound on $\dt$ in the assertion that gives a
positive value for $\dt$ only when $1<\xi <3$. 
\end{proof}

Indeed, this Lemma proves positivity of the numerical solution of Eq. (\ref{eq:discFP_matrix}),
provided that $f^0\geq 0$, and $\xi f^{1}-f^{0}\geq 0$. $f^{1}$ is the second starting value
of the numerical scheme, that can be calculated with the Euler scheme (\ref{eq:Euler}).

\begin{thm}\label{th:pos_bdf2}
Let $f^0\geq 0$ the discrete initial condition (\ref{eq:disc_init}), and let
$f^1$ the second starting value calculated with the Euler scheme (\ref{eq:Euler}) with 
an appropriate time step, such that $\xi f^1-f^0 \geq 0$, for $1<\xi <3$. Then, the BDF2 scheme (\ref{eq:discFP_matrix}) to Eq. (\ref{eq:forw}), defined in the periodic domain $\Omega_h$, is positive preserving for the solution $f^m$, with $m>1$.
\end{thm}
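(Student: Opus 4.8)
The plan is to derive this theorem as an immediate corollary of Lemma \ref{lem:bdf2}, which already carries out the substantive $M$-matrix estimate; the only thing left is a short bootstrapping argument. First I would verify that the two hypotheses of Lemma \ref{lem:bdf2} hold at the index $m^*=0$. The theorem assumes $f^0\geq 0$ and $\xi f^1-f^0\geq 0$ for some $\xi\in(1,3)$, and these are exactly the conditions $f^{m^*}\geq 0$ and $\xi f^{m^*+1}-f^{m^*}\geq 0$ at $m^*=0$. Invoking the lemma (under its stated bound $\dt\leq h(\xi-1)(3-\xi)/(a\xi h+2\beta(1+\omega))$) then yields $\xi f^{m+1}-f^m\geq 0$ for every $m>0$. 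Adjoining the $m=0$ instance, which is the standing hypothesis, I obtain $\xi f^{m+1}-f^m\geq 0$ for all $m\geq 0$.

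The second step converts these inequalities into genuine non-negativity by a one-line induction. The base case $f^0\geq 0$ is assumed. For the inductive step, suppose $f^m\geq 0$; then $\xi f^{m+1}-f^m\geq 0$ gives $\xi f^{m+1}\geq f^m\geq 0$, and since $\xi>1>0$ this forces $f^{m+1}\geq 0$. Chaining upward from $m=0$ shows $f^m\geq 0$ for all $m$, in particular for $m>1$, which is the assertion. I note that the first application of this step already yields $f^1\geq 0$, so the starting pair $(f^0,f^1)$ feeding the two-step recursion is itself non-negative.

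For completeness I would also address the phrase ``appropriate time step'' in the statement. The value $f^1$ is produced by the Euler scheme (\ref{eq:Euler}), and Lemma \ref{lem:eul} with $q=1$ guarantees $f^1\geq f^0/\xi$, i.e. $\xi f^1-f^0\geq 0$, precisely when the Euler step obeys $\dt<(\xi-1)/(a\xi+\beta(1+\omega)/h)$. Choosing the Euler time step below this threshold thus supplies the required starting inequality, while the subsequent BDF2 steps respect the (weaker) bound of Lemma \ref{lem:bdf2}; both simply ask $\dt$ to be small enough and are therefore simultaneously satisfiable. I do not expect a real obstacle in this proof: the delicate propagation of $\xi f^{m+1}-f^m\geq 0$ through the recursion, using the non-negativity of $(3I-2\dt A)^{-1}$ and of $2\tilde A+\xi R$, has already been discharged in Lemma \ref{lem:bdf2}, so the theorem merely repackages that inequality as a positivity statement.
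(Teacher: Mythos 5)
Your proposal is correct and follows essentially the same route as the paper, whose proof is simply the statement that the theorem is an application of Lemmas \ref{lem:eul} and \ref{lem:bdf2}; you have merely made explicit the bootstrapping (taking $m^*=0$ in Lemma \ref{lem:bdf2}, then inducting $f^m\geq 0 \Rightarrow f^{m+1}\geq f^m/\xi\geq 0$) and the role of Lemma \ref{lem:eul} in securing the starting inequality $\xi f^1-f^0\geq 0$ from the Euler predictor. No gaps.
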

\begin{proof}
 The proof is an application of the Lemmas \ref{lem:eul} and \ref{lem:bdf2}.
\end{proof}

In order to establish the stability of the discrete numerical schemes
of Eqs. (\ref{eq:discFP_matrix}) and (\ref{eq:Euler}), we need inequalities
of the form $\Vert f^{m+1} \Vert \leq K \Vert f^{m} \Vert$ evaluated 
in a suitable norm with $K$ possibly less or equal than $1$.
We prove that it realizes for the $1$-norm with $K=1$.

\begin{thm}\label{th:stab_eul}
Let the positivity condition of Theorem \ref{th:pos_eul} be fulfilled,
i.e. $\dt \leq 1/a$. Then, the Euler scheme (\ref{eq:Euler}) is stable
in the $1$-norm, that is  $\Vert f^{m} \Vert_1 \leq \Vert f^{m-1} \Vert_1$
for all $m$.
\end{thm}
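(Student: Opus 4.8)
The plan is to derive stability purely from the two structural properties already in hand---positivity (Theorem~\ref{th:pos_eul}) and conservativeness (Remark~\ref{rem:eul_cons}). The key observation is that the $1$-norm of a non-negative vector coincides with the sum of its entries: if $f_i\geq 0$ for all $i$, then $\Vert f\Vert_1=\sum_{i=1}^N|f_i|=\sum_{i=1}^N f_i$. Thus, once non-negativity of the iterates is secured, $1$-norm stability reduces to a statement about the preservation of the total mass, which is exactly what conservativeness supplies.

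First I would argue by induction on $m$ that every iterate is non-negative. The base case is the discrete initial datum $f^0\geq 0$. Assuming $f^{m-1}\geq 0$ and using the hypothesis $\dt\leq 1/a$, Theorem~\ref{th:pos_eul} gives $f^m\geq 0$, which closes the induction. Consequently $\Vert f^m\Vert_1=\sum_{i=1}^N f^m_i$ for every $m$. Next I would invoke conservativeness: by Remark~\ref{rem:eul_cons}, the column-sum identity $\sum_{i=1}^N A_{i,j}=0$ together with $\sum_{i=1}^N Q(f^m_i;\bar\alpha)=0$ yields $\sum_{i=1}^N f^m_i=\sum_{i=1}^N f^{m-1}_i$. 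Combining the two facts,
$$
\Vert f^m\Vert_1=\sum_{i=1}^N f^m_i=\sum_{i=1}^N f^{m-1}_i=\Vert f^{m-1}\Vert_1,
$$
so that equality in fact holds and the asserted inequality $\Vert f^m\Vert_1\leq\Vert f^{m-1}\Vert_1$ follows a fortiori.

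There is no genuinely hard analytic step here: the result is a bookkeeping consequence of positivity plus conservation, and the $K=1$ stability constant reflects that the total mass is preserved exactly rather than merely contracted. The only point requiring care is the induction---one must ensure $f^0\geq 0$ so that positivity, and with it the collapse of $\Vert\cdot\Vert_1$ to the conserved linear functional $\sum_i(\cdot)_i$, propagates through every time step. In particular, the hypothesis $\dt\leq 1/a$ is used solely to license the application of Theorem~\ref{th:pos_eul}; it plays no further role in the norm estimate itself.
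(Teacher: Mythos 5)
Your proof is correct, but it takes a different (and more elementary) route than the paper's. The paper proves the theorem by explicitly inverting the operator in Eq.~(\ref{eq:eul_proof}) and bounding the induced matrix $1$-norm of the inverse via a Neumann-series estimate, $\Vert (I-r\tilde A/(1+r(1+\omega)))^{-1}\Vert_1\leq 1+r(1+\omega)$, which yields $\Vert f^m\Vert_1\leq \Vert (1-a\,\dt)f^{m-1}+\dt\,Rf^{m-1}\Vert_1$; it then removes the modulus using non-negativity of $f^{m-1}$ and invokes $\sum_i Q(f_i^{m-1};\bar\alpha)=0$ to conclude. You bypass the operator-norm estimate entirely and argue that positivity (propagated by induction via Theorem~\ref{th:pos_eul}) collapses the $1$-norm to the linear functional $\sum_i(\cdot)_i$, which is exactly conserved by Remark~\ref{rem:eul_cons}; this gives equality of the norms, a slightly stronger conclusion than the stated inequality. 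The paper itself essentially endorses your argument after the fact: Remark~\ref{rem:stability} observes that conservativeness plus non-negativity imply $1$-norm stability with equality. What the paper's route buys is an estimate that does not hinge on exact mass conservation and would degrade gracefully if the quadrature of the integral term were not exactly conservative; what your route buys is brevity and the sharper constant. One small point of care, which you already flag: both arguments tacitly require $f^0\geq 0$ (and, for the paper's proof, $f^{m-1}\geq 0$ at each step), since the hypothesis $\dt\leq 1/a$ alone does not make the right-hand side non-negative; this is implicit in the theorem because $f^0$ is a discrete probability density.
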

\begin{proof}
Let $r=\dt\, \beta/h$ and invert the matrix operator at l.h.s., then Eq. (\ref{eq:eul_proof}) reads as
$$
f^m= \dfrac{\left(I-\dfrac{r\tilde A}{1+r(1+\omega)}\right)^{-1}}{1+r(1+\omega)}  [(1-a\,\delta t)f^{m-1}+\delta t R f^{m-1}].
$$
Now we observe that 
$$
\left\Vert \left(I-\dfrac{r\tilde A}{1+r(1+\omega)}\right)^{-1} \right\Vert_1\leq
\left(1-\left \Vert \dfrac{r\tilde A}{1+r(1+\omega)}\right\Vert_1 \right)^{-1}=
1+r(1+\omega).
$$
Hence, 
$$
\Vert f^m \Vert_1 \leq \Vert (1-a\,\delta t)f^{m-1}+\dt\, R f^{m-1}\Vert_1.
$$
Since $\dt \leq 1/a$, all the components of the vectors inside the norm
at the r.h.s. are positive, so that the modulus for the evaluation of the $1$-norm can 
be removed. Using $\sum_{i=1}^N Q(f_i^m,\bar\alpha)=0$ as in Rem. \ref{rem:eul_cons}, we get the statement of the theorem.
\end{proof}

Now we can prove the stability of the numerical scheme with BDF2 integration 
of Eq. (\ref{eq:discFP_matrix}).

\begin{thm}\label{th:stab_bdf2}
Let the positivity condition of the Theorem \ref{th:pos_bdf2} be fulfilled, i.e.
let $\dt$ be the time step size of the numerical scheme of Eq. (\ref{eq:discFP_matrix}),
$f^0\geq 0$ the discrete initial condition (\ref{eq:disc_init}) and 
$f^1$ the second starting value evaluated at the time $\dt$. 
If there exists a real number $\xi$ such that $f^1\geq f^0/\xi$ with $1<\xi <3$ and
$\dt \leq h(\xi-1)(3-\xi)/(a\xi h+2\beta(1+\omega))$, then the BDF2 scheme (\ref{eq:discFP_matrix}) is stable in the $1$-norm, that is  $\Vert f^{m} \Vert_1 \leq \Vert f^{m-1} \Vert_1$ for all $m$.
\end{thm}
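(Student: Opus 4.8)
The plan is to reduce the $1$-norm stability of the BDF2 scheme to the two structural properties already established, namely positivity and conservativeness, so that none of the delicate matrix-norm estimates used for the Euler scheme in Theorem \ref{th:stab_eul} are actually required here. First I would note that, under the stated hypotheses, Theorem \ref{th:pos_bdf2} guarantees $f^m \geq 0$ for every $m$. For a non-negative vector the $1$-norm coincides with the sum of its components, so $\Vert f^m \Vert_1 = \sum_{i=1}^N f^m_i$. This is precisely the device that lets one drop the modulus at the end of the proof of Theorem \ref{th:stab_eul}.

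Next I would invoke the conservation identity of Remark \ref{rem:cons_bdf}. Because the second starting value $f^1$ is produced by the Euler scheme, which is conservative by Remark \ref{rem:eul_cons}, the two initial sums agree, $\sum_{i=1}^N f^1_i = \sum_{i=1}^N f^0_i$. The BDF2 identity $3\sum_{i=1}^N f^{m+1}_i = 4\sum_{i=1}^N f^m_i - \sum_{i=1}^N f^{m-1}_i$ then propagates this equality inductively, yielding $\sum_{i=1}^N f^m_i = \sum_{i=1}^N f^0_i$ for all $m$. Combining the two observations gives $\Vert f^m \Vert_1 = \sum_{i=1}^N f^m_i = \sum_{i=1}^N f^{m-1}_i = \Vert f^{m-1}\Vert_1$, so the scheme is in fact norm-preserving in the $1$-norm; the inequality $\Vert f^m \Vert_1 \leq \Vert f^{m-1}\Vert_1$ claimed in the theorem then follows trivially, with equality.

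The subtle point, and the main obstacle, is that this constant-sum argument is only valid while positivity holds: if any component of $f^m$ were to turn negative, the $1$-norm would strictly exceed the conserved sum and the reasoning would collapse. The real work therefore lies entirely in confirming that the hypotheses $f^1 \geq f^0/\xi$ with $1<\xi<3$ and $\dt \leq h(\xi-1)(3-\xi)/(a\xi h + 2\beta(1+\omega))$ feed correctly through Lemma \ref{lem:bdf2} into Theorem \ref{th:pos_bdf2}, so that $\xi f^{m+1} - f^m \geq 0$ and hence $f^m \geq 0$ at every step. Once that positivity has been secured, the stability statement is immediate.
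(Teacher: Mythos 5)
Your argument is correct, and it reaches the conclusion by a genuinely different (and more economical) route than the paper's own proof. The paper proves the theorem by rewriting the scheme as $M f^{m+1} = (4-\zeta) f^m +(\zeta - a \dt) f^m - f^{m-1} + \dt R f^m$, establishing $\Vert M^{-1}\Vert_1 = 1/3$ by the same Neumann-series estimate as in Theorem \ref{th:stab_eul}, and then choosing the auxiliary parameter $\zeta$ (using the bound $a\,\dt < 4-2\sqrt{3}$) so that each summand on the right-hand side is a non-negative vector; the modulus is then dropped and the column-sum identities of Remark \ref{rem:eul_cons} close the estimate. You instead bypass the matrix-norm machinery entirely: positivity from Theorem \ref{th:pos_bdf2} lets you identify $\Vert f^m\Vert_1$ with $\sum_i f^m_i$, and the exact conservation identity $3\sum_i f^{m+1}_i = 4\sum_i f^m_i - \sum_i f^{m-1}_i$ of Remark \ref{rem:cons_bdf} then propagates $\sum_i f^1_i=\sum_i f^0_i$ to all $m$, giving stability with equality. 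This is precisely the observation the authors record \emph{after} their proof, in Remark \ref{rem:stability} (``conservativeness and non negativity imply stability''), so you have in effect anticipated that remark and promoted it to the proof itself; what the paper's longer route buys is an estimate that does not presuppose the exact equality of the starting sums but only the inductive inequality $\sum_i f^m_i \le \sum_i f^{m-1}_i$. One small point to make explicit: your step $\sum_i f^1_i=\sum_i f^0_i$ relies on $f^1$ being generated by the (conservative) Euler scheme, which is stated in Theorem \ref{th:pos_bdf2} and in the surrounding text but not literally in the hypotheses of Theorem \ref{th:stab_bdf2}; you correctly flag and use this, and the paper's own induction needs the analogous base case, so this is a shared, not a new, assumption.
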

\begin{proof}
The numerical scheme (\ref{eq:discFP_matrix}) can be written as 
$$
M f^{m+1} = (4-\zeta) f^m +(\zeta - a \dt) f^m - f^{m-1} + \dt R f^m,
$$
where $R$ is defined as in Thm. \ref{th:pos_eul}.
We apply $M^{-1}$ and evaluate the $1$-norm to both sides.
Following the same calculations as in Thm. \ref{th:stab_eul}, we get
that $\Vert M^{-1} \Vert_1 = 1/3$.

From the bound on $\dt$, we note that
\begin{equation}\label{eq:bound_bdf}
a\, \dt < (\xi-1)(3-\xi)/\xi \leq 4-2\sqrt{3} < 0.536.
\end{equation} 
This means that for all $\zeta$ in the interval $5-2\sqrt{3} < \zeta < 3$, it is
$\zeta - a\,\dt=\xi$ with $\xi \in (1,3)$. 
Now we have that $f^m\geq 0$ by virtue of the positivity 
condition, $ (\zeta - a \,\dt) f^m - f^{m-1}\geq 0 $ by our assumptions, 
and $4-\zeta > 0$, hence
is guaranteed that the sum in the r.h.s. is a non negative vector and
the modulus in the calculation of the $1$-norm can be removed.
Using the property given in Rem. \ref{rem:eul_cons}, we conclude that
$\Vert f^{m+1}\Vert_1 \leq  \Vert f^{m}\Vert_1$. 
\end{proof}

\begin{rem}\label{rem:stability}
Indeed, in the stability Theorem \ref{th:stab_bdf2} the equality $\Vert f^{m+1}\Vert_1 =  \Vert f^{m}\Vert_1$ holds. In fact, because of the conservativeness from Rem. \ref{rem:cons_bdf}
we have $\sum_{i=1} f^{m+1}_i = \sum_{i=1} f^{m}_i $, and under the non negativity
condition of Theorem \ref{th:pos_bdf2} all the components of the vectors 
$f^{m+1}_i, f^{m}_i$ are non negative, so that the previous conservativeness identity
corresponds to the 1-norm equivalence.
Further, we can state that for these numerical schemes the conservativeness
and the non negativity imply the stability of the discrete operator.
\end{rem}

\begin{rem}
We can finally conclude from the Lax equivalence theorem, that for regular solutions of the Kolmogorov forward equation $f(x,t), (x,t)\in [\Omega,T]$, provided that the hypothesis of Thm. \ref{th:stab_bdf2}, then
the numerical scheme of Eq. (\ref{eq:discFP_matrix}) yields
discrete solutions that are second order convergent in time and space.
\end{rem}

\begin{rem}
The non negativity conditions of for the Euler scheme of Lemma \ref{lem:eul}
and BDF2 of Lemma \ref{lem:bdf2}, can be correspondingly written as
$$
\dt \leq \dfrac{\xi-1}{a\,\xi+B \coth(h\,B/(2C))/h}
$$
and
$$
\dt \leq \dfrac{(\xi-1)(3-\xi)}{a\xi +2 B \coth(h\,B/(2C))/h}.
$$
We note that for $h\rightarrow 0$ or $B\rightarrow 0$ the upper bound
for $\dt$ scales as $h^2/C$. For $C\rightarrow 0$ it scales as $h/B$.
\end{rem}

%

\paragraph{Adjoint equation.}
The discrete adjoint equation can be found by discretizing the Lagrangian function
of Eq. (\ref{eq:Lagr}) and then performing the variations on the discrete
variables. This is know as the \textit{discretize-then-optimize} approach (see Ref. \cite{mar:alf} for details). This technique yields the following discrete adjoint equation
\begin{equation}\label{eq:discADJ_matrix}
 M^\dag p^{m} = 4p^{m+1} - p^{m+2} + \dt \tilde Q(p^{m+1};\bar\alpha), 
\end{equation}
where $M^\dag$ is the transpose of $M$, and 
$\tilde Q(p^{m+1};\bar\alpha) = h \sum_{j=1}^{\Nth} \alpha_j \sum_{k=1}^{N} \tilde p_{ik}^m \theta_{jk} - a p_i^m$,
with $\tilde p_{ik}^m \approx ( p(x_i - s_k,t_m) )_{\Omega_h}$.

The numerical stability is given by the same condition for the forward equation, since
the transpose of the operator $M$ has the same eigenvalues, but in this case the
non negativity and conservativeness property are not required.

Care has to be taken for the discrete terminal condition, since it can not be defined 
through the Eq. (\ref{eq:adj}) for the presence of the $\delta$-Dirac measure. 
For this purpose we discretize the term (\ref{eq:int_func}) as follows
$$
\dfrac{1}{L} \sum_{l=1}^L \sum_{i=1}^N \int_{x_i-1/2}^{x_i+1/2} \log(f(x,T)) \dl(x-X_l) dx=
\dfrac{1}{L} \sum_{l=1}^L \sum_{i=1}^N \log(f(\hat x_i,T)) 1_{\{X_l\in [ x_i-1/2,x_i+1/2 )\}},
$$
where $ \hat x_i$ are the points of the integral average theorem.
Then we use the approximation $f(\hat x_i,T)\approx f_{i}^{N_T}$, so that, by 
performing the variation $\delta f_i^{N_T}$ on this discrete functional, we get the discrete 
terminal condition
\begin{equation}\label{eq:disc_term}
 p^{N_T}_i = p_{T,i} = -\dfrac{1}{L} \sum_{l=1}^L 1_{\{X_l\in [ x_i-1/2,x_i+1/2 )\}} /f_{i}^{N_T}, \quad i=1,\dots,N.
\end{equation}
According to Eq. (\ref{eq:adj}), it completes the formulation of the discrete adjoint problem.

\paragraph{Discrete gradient.} The discrete of the reduced gradient related to the optimality condition of Eq. (\ref{eq:opt})
is calculated with the mid-point quadrature formula. Each component $j$ is given by
\begin{equation}\label{eq:disc_grad}
(D_{\bar\alpha} \hat J)_j := - \delta t \, h^2 \sum_{m=0}^{N_T} \sum_{i=1}^N \sum_{k=1}^N (\hat f^m_{ik} - f^m_i ) p_i^m  \theta_{jk},
\end{equation}
where $(D_{\bar\alpha} \hat J)_j \approx (\nabla_{\bar\alpha} \hat J)_j$.

\paragraph{Non linear conjugate gradient method.}

The availability of the discrete gradient allows us to implement a non linear conjugate gradient scheme (NLCG) in order to solve the optimization problem (\ref{eq:max_reduced}). 
NLCG represents an extension of the linear conjugate gradient method to
non-quadratic problems \cite{gilbert,shanno, mar:alf}.

The optimality system is solved by implementing the gradient given by the 
following algorithm:
\begin{algorithm}[Evaluation of the Gradient at $\bar\alpha$]
\label{evaluate_gradient}
 \noindent
\begin{enumerate}
 \item Solve the discrete FP equation (\ref{eq:disc_fp}) with given 
 initial condition (\ref{eq:disc_init});
 \item Solve the discrete adjoint FP equation (\ref{eq:discADJ_matrix}) 
 with terminal condition (\ref{eq:disc_term});
 \item Compute the approximated discrete gradient $D_{\bar\alpha} \hat J$ by using (\ref{eq:disc_grad});
 \item End.
\end{enumerate} 
\end{algorithm}
in a NLCG scheme. 
The search directions are recursively as
\begin{equation}\label{ncgdir}
d_{k+1}=-g_{k+1} + \beta_k \, d_{k}, 
\end{equation}
where $k=0,1,2,\ldots$ in this paragraph stands for the iteration index,
 $g_k= D_{\bar\alpha} \hat J(\bar\alpha_k)$ is the numerical gradient, with $d_0=-g_0$. 
Let $\bar\alpha_k$ an estimation of the best rates at the iteration $k$, 
the next one for a minimum point are given by
\begin{equation}\label{ncgupdate}
\bar\alpha_{k+1} = \bar\alpha_k + \xi_k\, d_k , 
\end{equation}
where $\xi_k >0$ is a steplength obtained with a line-search
 that satisfies the Armijo condition of sufficient decrease 
of $\hat J$'s value as follows 
\begin{equation}\label{armijo}
\hat J(\bar\alpha_k + \xi_k \, d_k) \le \hat J(\bar\alpha_k) +  \delta \,\xi_k \,
{( \nabla \hat J(\bar\alpha_k) , d_k)_{U}}, 
\end{equation}
where $0< \delta  < 1/2$; see \cite{nocedal}. Notice that we use the inner product 
of the $U=\R^n$ space.

%

For the formula of $\beta_k$ we use the formulation due to Dai and 
Yuan \cite{dai}
\begin{equation}
\beta_k^{DY}=\frac{( g_{k+1}, g_{k+1} )_U }{( d_k, y_k)_U} ,
\label{betadaiyuan}
\end{equation}
where $y_k=g_{k+1}-g_k$.

Summarizing, the NLCG scheme is implemented as follows 

\begin{algorithm}[NLCG Scheme] \label{alg:NLCG}
\noindent
\begin{itemize}
\item Input: initial approx. $\bar\alpha_0$, $d_0=-\nabla \hat J(\bar\alpha_0)$, 
index $k=0$, maximum $k_{max}$, tolerance $tol$. 
\begin{enumerate}
\item While ($k < k_{max}$ \&\& $\|g_k\|_{\R^\ell} > tol$  )  do 
\item Search the steplength $\xi_k >0 $, by sequentially shrinking, 
along $d_k$ satisfying (\ref{armijo});
\item Set $\bar\alpha_{k+1} = \bar\alpha_k + \xi_k\, d_k$. i.e. Eq. (\ref{ncgupdate}), 
according to the KKT condition, the 
eventually negative components of $\bar\alpha_{k+1}$ are set to $0$.
\item Compute $g_{k+1}=\nabla \hJ(\bar\alpha_{k+1})$ 
using Algorithm \ref{evaluate_gradient};
\item Compute $\beta_k^{DY}$ given by (\ref{betadaiyuan});
\item Let $d_{k+1}=-g_{k+1} + \beta_k^{DY} \, d_{k}$, i.e. Eq (\ref{ncgdir})
\item Set $k=k+1$;
\item End while
\end{enumerate}
\end{itemize}
\label{NCG}
\end{algorithm}

\paragraph{Correction factor for the logarithm in the objective.}

The numerical evaluation of the functional of Eq. (\ref{eq:functional}) 
has the problem of the logarithm in the points $X_l$ where 
the PDF at the final time has vanishing values.
Hence, the functional is replaced as follows
\begin{equation}\label{eq:functional2}
  J_\epsilon(f,\bar\alpha) = \frac{1}{L} \sum_{l=1}^L \log(\max( \epsilon, f(X_l,T,\bar\alpha)) ),
\end{equation}
with $\epsilon=10^{-12}$.

\paragraph{Nearest grid point for sample values }
The discrete PDF is defined on the mesh grid $\Omega_h$, the sample
values $X_l$ used for the evaluation of the PDF are approximated to the 
nearest values of the space mesh grid $\Omega_h$. This approximation affects both the
value of the functional and the terminal condition for the adjoint equation.

\paragraph{Von Mises distribution.}
The initial distribution $f_0(x)$ of Eq.(\ref{eq:fp}) is set as the following von Mises distribution
\begin{equation}
\rho(x;\mu,\kappa) = \frac{e^{\kappa\cos(2\pi (x-\mu-\Om_a)/(\Om_b-\Om_a)-\pi )}}{2\pi I_0(\kappa)},
\end{equation}
where $I_0(.)$ is the modified Bessel function of order $0$, and $\kappa$ is the concentration parameter that
should be taken large in order to approximate the Dirac delta function in zero as initial data for the forward PIDE.

\section{\label{sec:Cons} Numerical Tests} 

In this section we perform the non parametric estimation of L\'evy density distribution function,
that is to find the value $\bar\alpha = (\alpha_1, \ldots, \alpha_{\Nth})$ such that best fits
with the given data. We present two validation test cases and one application case to finance.

\paragraph{Testing for Consistency.}
We perform a numerical test on the consistency of our estimation procedure. 
According to the maximum likelihood technique, consistency here means that, if we fix a parametrization $N_\Theta$ and the parameter values, we can (approximately) reconstruct these values form our estimation procedure and maximization of the AIC, provides that a sufficiently large sample from the true distribution is given.  We simulate such a sample
using pseudo random realizations for the L\'evy process  $X(t)$. 
Details on the simulation methods can be found e.g. in \cite{Iac}. 

However note that in our case, cyclic boundary conditions have to be taken into account.
The data setting for our test case is as follows: the space domain $\Omega=[-\pi,\pi)$, the final time
$T=1$, the initial von Mises distribution has center $\mu = 0 $ and wideness $\kappa = 400$,
the drift of the stochastic process is $b=0$ and the Gaussian volatility is $\sigma = \sqrt{0.02}$.
The setting for the numerical solution is: space grid size $N=420$, time grid size $N_T=250$. 
The setting for the optimization is critical, we found the following parameters by the experience: initial approximation of the parameter rates $\bar\alpha_0=(0.1, 0.1, \ldots)$, constant of the Armijo condition $\delta=0.1$,
initial step-length of point 2. of Algorithm \ref{alg:NLCG} is set to $\xi_k=0.5$ and shrink 
by a factor $0.3$, $\xi_{k+1}=0.3\xi_k$.

As a first test, we perform a fit for a set of $L=10^5$ values generated by a Monte Carlo
algorithm for a simulated L\'evy process on the circle, with the following
five values of the jump rates: $\hat\alpha=\{3,2,1,0.5,0.25\}$.
We solve the fitting problem, i.e. calculating the estimates to $\alpha_1,\ldots,\alpha_{\Nth}$,
for different numbers of interpolatory functions: $\Nth=3,\ldots,7$.
The center  $\theta_1,\ldots,\theta_{\Nth} $ of the basis functions  $\Theta_j(x)$ are equally spaced 
in the domain $(-1,1)$ at the places $\theta_j=-1+j \De$, $j=1,\ldots, \Nth$, $\De=2/(\Nth+1)$,
this means the basis functions do not cover all the domain $\Om$.
In the following table the calculated value of $\{\alpha_j\}$ for each problem are reported 
versus $\Nth$
\begin{center}
\begin{tabular}{l|ccccc}
 & $\Nth=3$ & $\Nth=4$ & $\Nth=5$ & $\Nth=6$ & $\Nth=7$\\
 \hline\\
$\alpha_1$ & 3.4502 & 3.1771 & 2.9746 & 2.8580 & 2.8452\\
$\alpha_2$ & 1.1089 & 1.5577 & 1.8100 & 1.9922 & 2.1003\\
$\alpha_3$ & 0.4505 & 0.6576 & 1.0198 & 1.3025 & 1.5083\\
$\alpha_4$ &  & 0.3362 & 0.4951 & 0.7607 & 1.0077\\
$\alpha_5$ &  &  & 0.2490 & 0.3946 & 0.6137\\
$\alpha_6$ &  &  &  & 0.2042 & 0.3428\\
$\alpha_7$ &  &  &  &  & 0.1847
\end{tabular}
\end{center}

\vspace{3mm}

We see the good match for $\Nth=5$ with the original rates $\hat\alpha$.
In Figs. \ref{fig:data_fit3},\ref{fig:data_fit5} and \ref{fig:data_fit6} we can also 
appreciate the good data fitting between the calculated PDF and the histograms of
the simulated Monte Carlo data, for the proposed optimization problem with $\Nth=3,5,6$.

\begin{figure}
\centering
\includegraphics[width=0.45\textwidth, height=0.22\textheight]{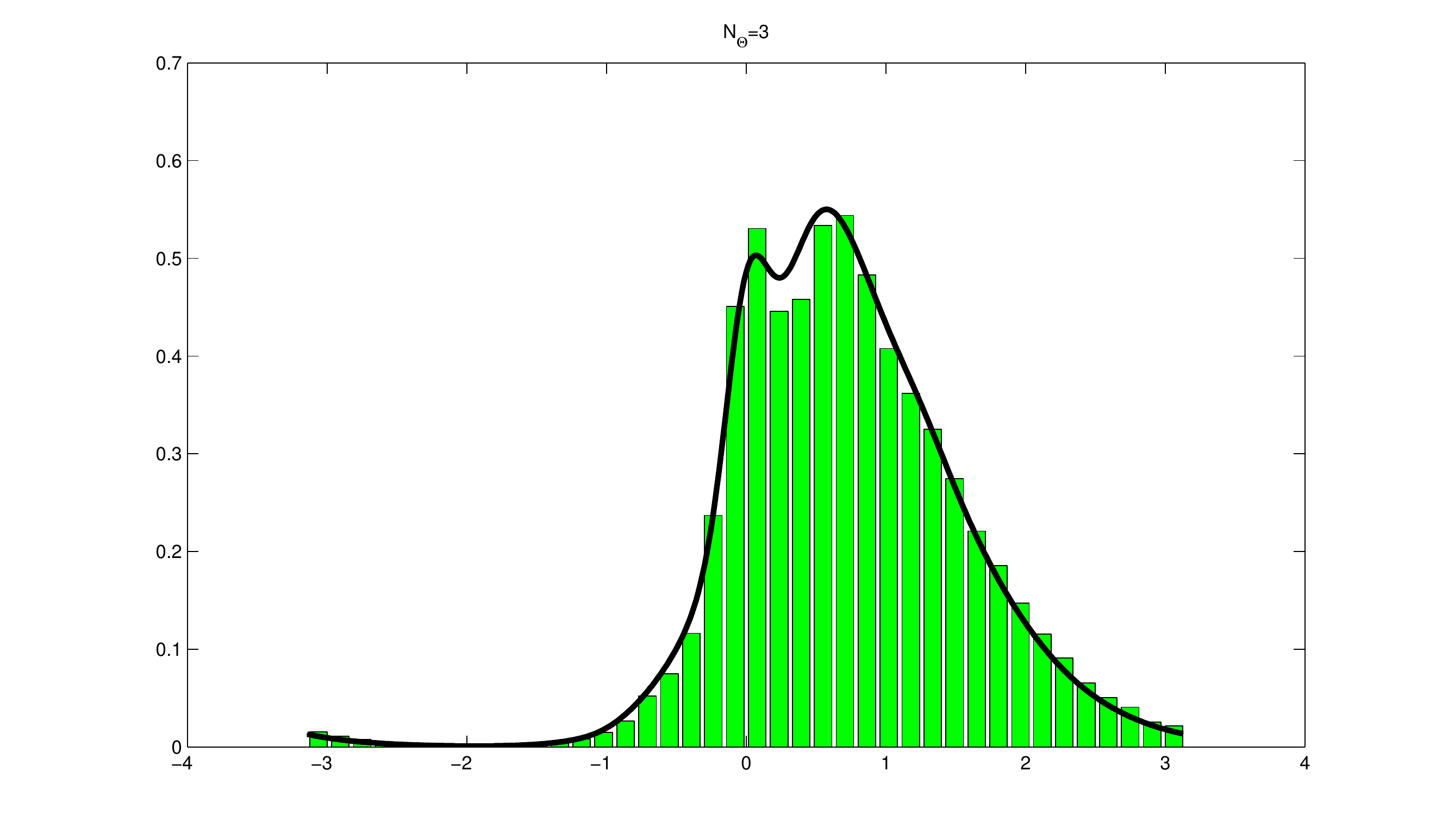}
\includegraphics[width=0.45\textwidth, height=0.22\textheight]{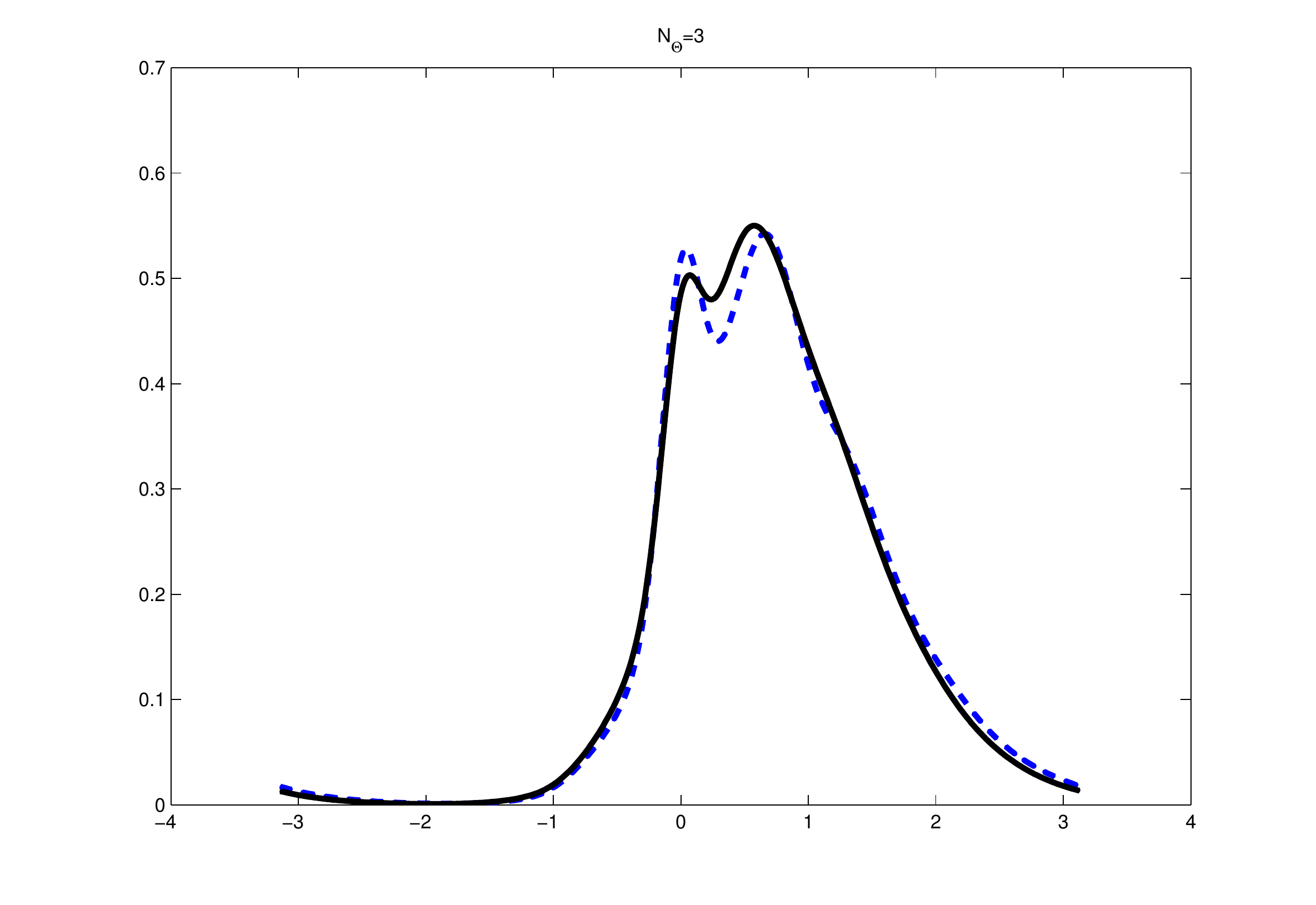}
\caption{Left. Result of the data fitting with $\Nth=3$ rates. 
Histograms: experimental L\'evy data collected in $40$ bins. 
Solid line calculated PDF. 
Right. Dashed line calculated PDF with the original $5$ rates.} \label{fig:data_fit3}
\includegraphics[width=0.45\textwidth, height=0.22\textheight]{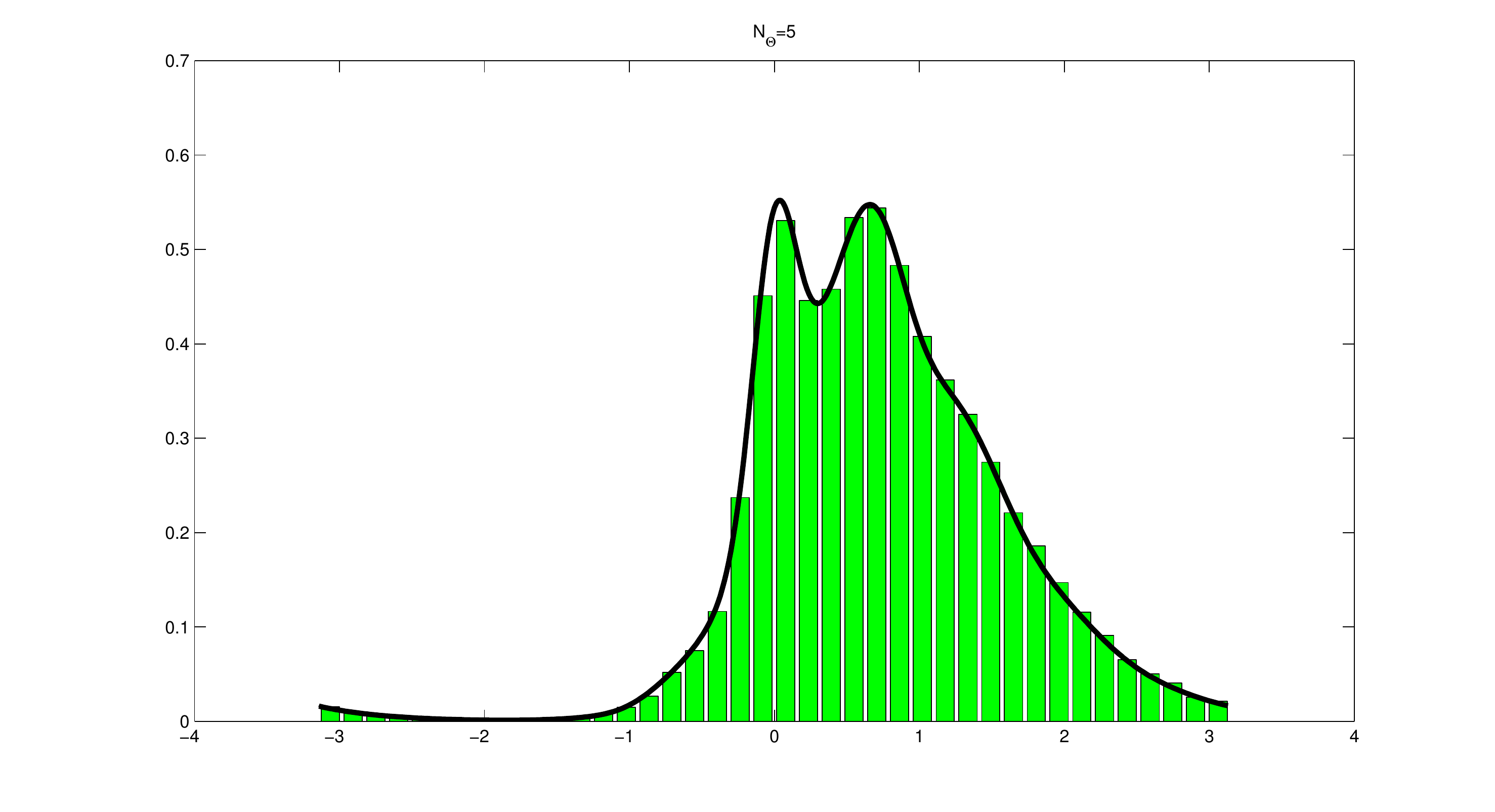}
\includegraphics[width=0.45\textwidth, height=0.22\textheight]{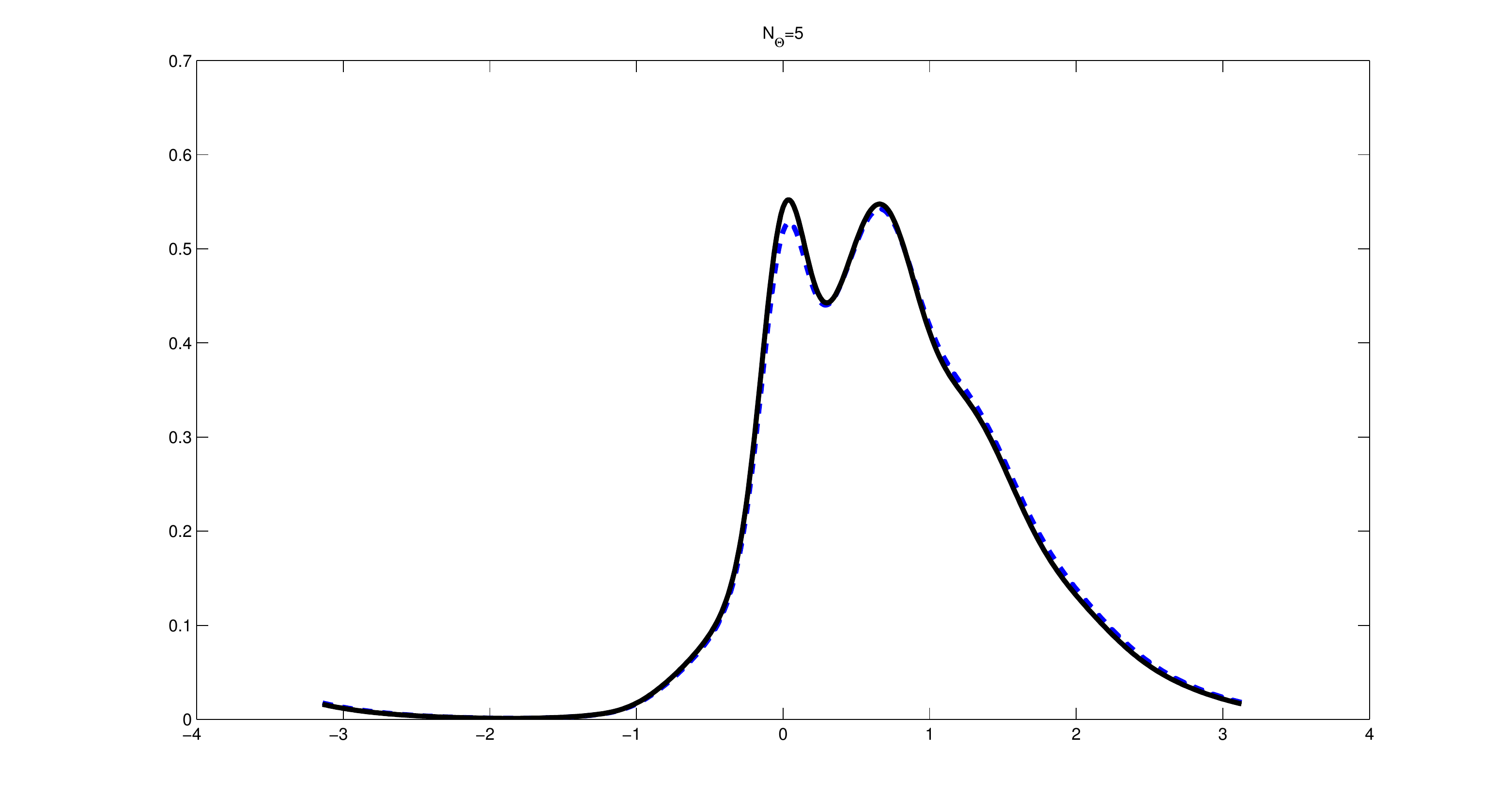}
\caption{Result of the data fitting with $\Nth=5$ rates. 
Histograms: experimental L\'evy data collected in $40$ bins. 
Solid line calculated PDF.
Right. Dashed line calculated PDF with the original $5$ rates.} \label{fig:data_fit5}
\includegraphics[width=0.45\textwidth, height=0.22\textheight]{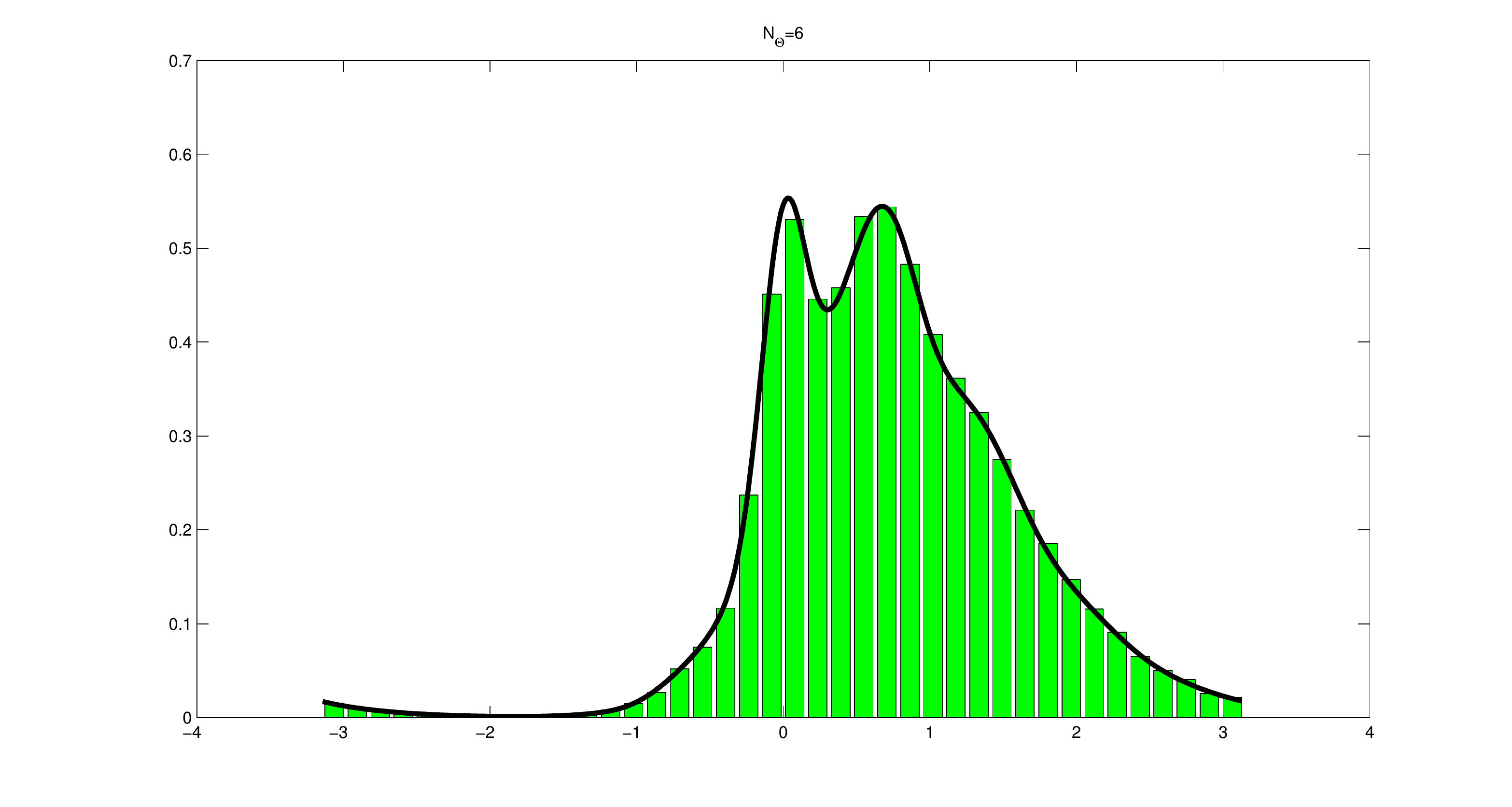}
\includegraphics[width=0.45\textwidth, height=0.22\textheight]{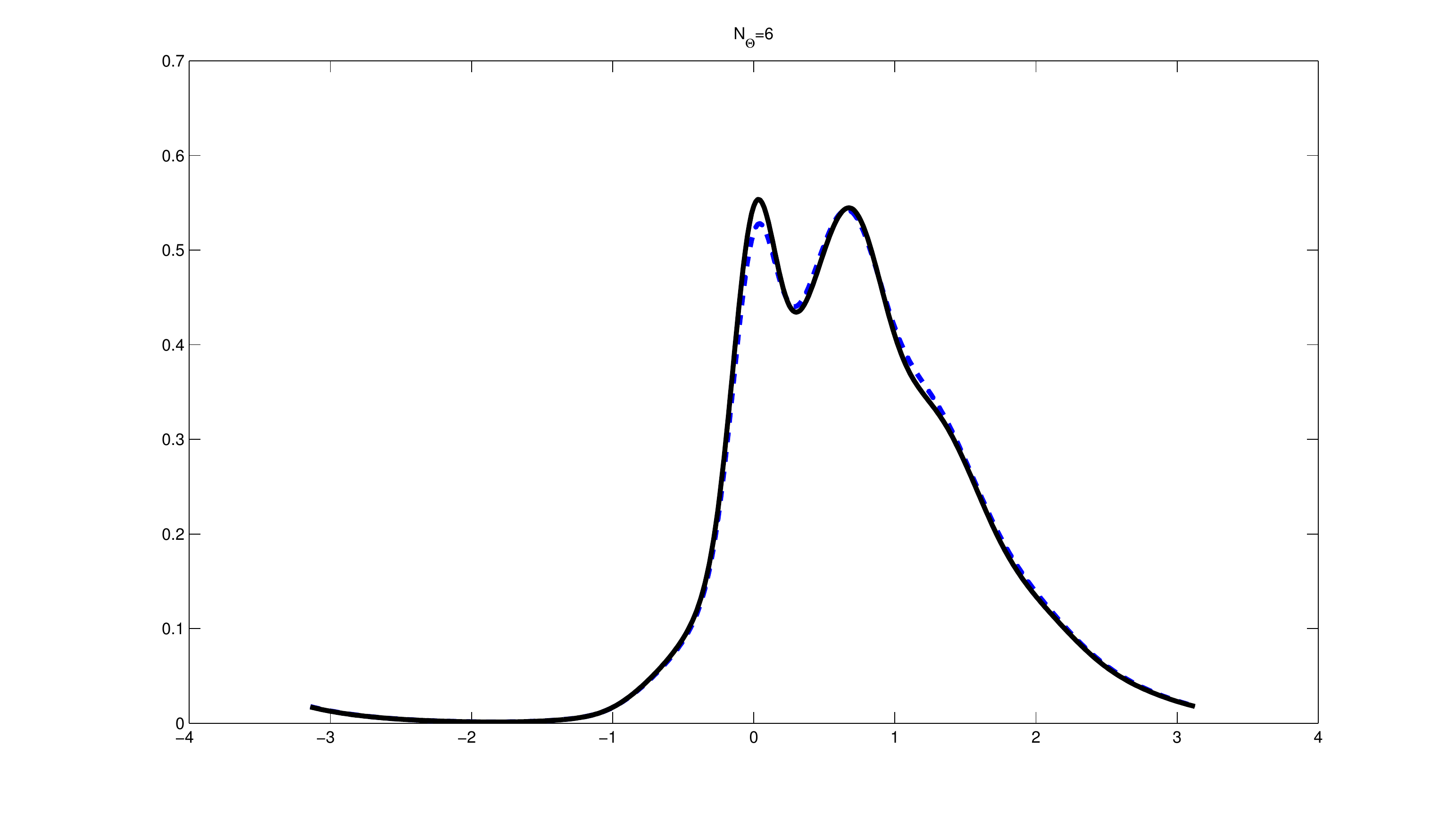}
\caption{Result of the data fitting with $\Nth=6$ rates. 
Histograms: experimental L\'evy data collected in $40$ bins. 
Solid line calculated PDF.
Right. Dashed line calculated PDF with the original $5$ rates.} \label{fig:data_fit6}
\end{figure}

Another interesting problem is the selection of the number of parameters $\Nth$ and the corresponding 
basis functions $\Theta_j$ for the best data fit. 
In Fig. \ref{fig:aic_test} we depict the result of the Akaike's Information Criterion (AIC) \cite{BA}, given by
\begin{equation}
AIC(\Nth) = L  J(f,\bar\alpha^*) - \log(\Nth).
\end{equation}
A common choice in statistics is to pick that parametrization that maximises the AIC. We can see that 
criterion gives the value $N_{\Th,opt}=6$, while the correct value is $5$. The difference in the AIC 
is however rather small for $\Nth$ between 5 and 7.

\begin{figure}
 \centering 
 \includegraphics[width=0.6\textwidth]{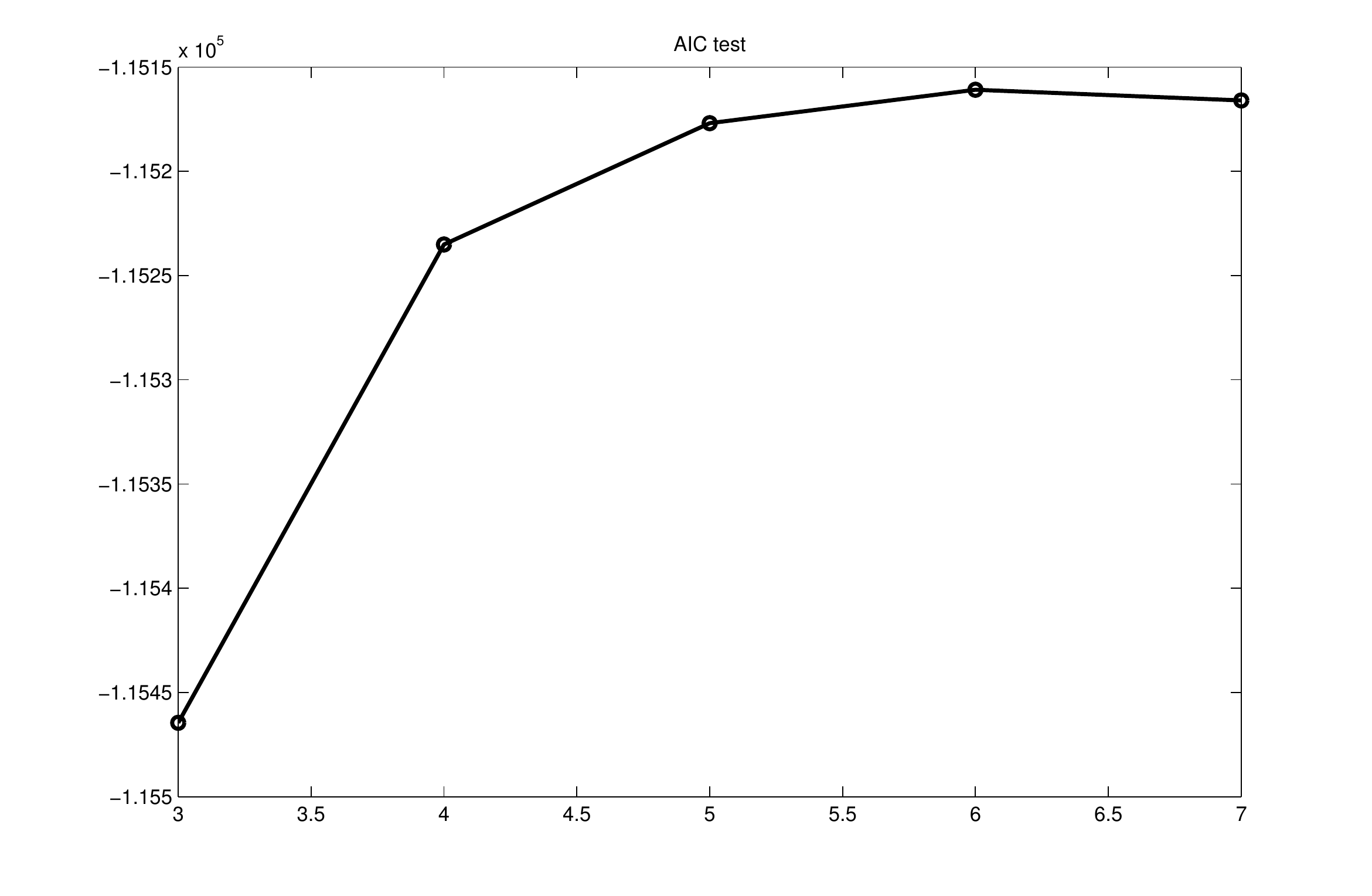}
\caption{Test for the appropriate regularisation with Akaike's Information Criterion}\label{fig:aic_test}
\end{figure}

\paragraph{Fitting Data from a bi-directional gamma process.}
In the second test we fit the final position at $T=1$ of $10^{5}$ samples of a stochastic process 
with the jumps distributed according a bi-directional gamma process with L\'evy measure $\nu$ on $\R$ 
given by the density \cite{App}
\begin{equation}
d\nu(s)=A\frac{e^{-\beta|s|}}{|s|} ds.
\end{equation}
Here $A>0$ is the so-called {\it shape} parameter and $\beta$ is the {\it rate} parameter.
Note that this is not a finite measure, so we are out of the compound Poisson class, and the trajectory of the 
bidirectional gamma process as infinitely many (small) jumps.
In \cite{App} only the unidirectional Gamma process is described. Let $Y^+(t)$ be such a 
unidirectional gamma process, then the L\'evy measure is
\begin{equation}
d\nu_+(s)=A\frac{e^{-\beta s}}{s}1_{\{s> 0\}}(s) ds \mbox{ and } 
f_{Y^+(t)}(y)=\frac{\beta^{At}}{\Gamma(At)}y^{At-1}e^ {-\beta y}1_{\{y> 0\}}(y).
\end{equation}
Let thus $Y^+(t)$ and $Y^-(t)$ be two independent copies of the Gamma process, then
\begin{equation}
Y(t)=Y^+(t)-Y^-(t)
\end{equation}
is our bi-directional gamma process, which is the jump part of our L\'evy process that also 
includes diffusion as in the first experiment.
If we project $Y(t)$ to the torus $[-\pi,\pi]$, the effect on the projected Levy measure $\phi_*\nu$,  see (\ref{eqa:wrappedLevyMeasure}), of the projected L\'evy process $X(t)=\phi(Y(t))$ is 
\begin{equation}
d\phi_*\nu(s)=\left(\sum_{n=0}^\infty A\frac{e^{-\beta(|s|+n\pi)}}{|s|+n\pi}\right)ds=\frac{A}{\pi}e^{-\beta|s|}\left(\sum_{n=0}^\infty \frac{e^{-\beta\pi n}}{\frac{|s|}{\pi}+n}\right)ds.
\end{equation}
Using
\begin{equation}
\sum_{n=0}^\infty \frac{e^{-qc}}{p+n}=e^{-q}\Phi(e^{-q},1,p),~~p\not=0, q>0,
\end{equation}
with $\Phi(z,s,a)$ the Lerch transcendent, for $p=|s|/\pi$ and $q=\beta\pi$, we get
\begin{equation}
d\phi_*\nu(s)=\frac{A}{\pi}e^{-\beta(|s|+\pi)}\Phi\left(e^{-\beta\pi},1,\frac{|s|}{\pi}\right)ds
\end{equation}
as the L\'evy measure on the torus for the projected bi-directional  Gamma process. 

In the simulations data are generated by scaling the rate parameter such that $1/\beta=1$,
and setting the shape to $A=0.5$. Finally, the data $Y(t)$ are projected to the torus $[-\pi,\pi)$.
In Fig. \ref{gamma_fit} we report the result of the AIC test the fit with $N_\Theta=9$
basis functions centered to $\theta_j=(-1+2(j-1)/9)\pi, j=1,\ldots 9$, whose the calculated rates are
$ \bar\alpha = (0, 0.0417, 0.0235, 0.1529, 0.8827, 0.8616, 0.1517, 0.0316, 0.0396)$. We conclude that our procedure results in high quality fits, even for L\'evy distributions that are not part of our hierarchy of parametrizations, but can only be approximated by these.  
\begin{figure}
\centering
 \includegraphics[width=0.45\textwidth, height=0.3\textheight]{./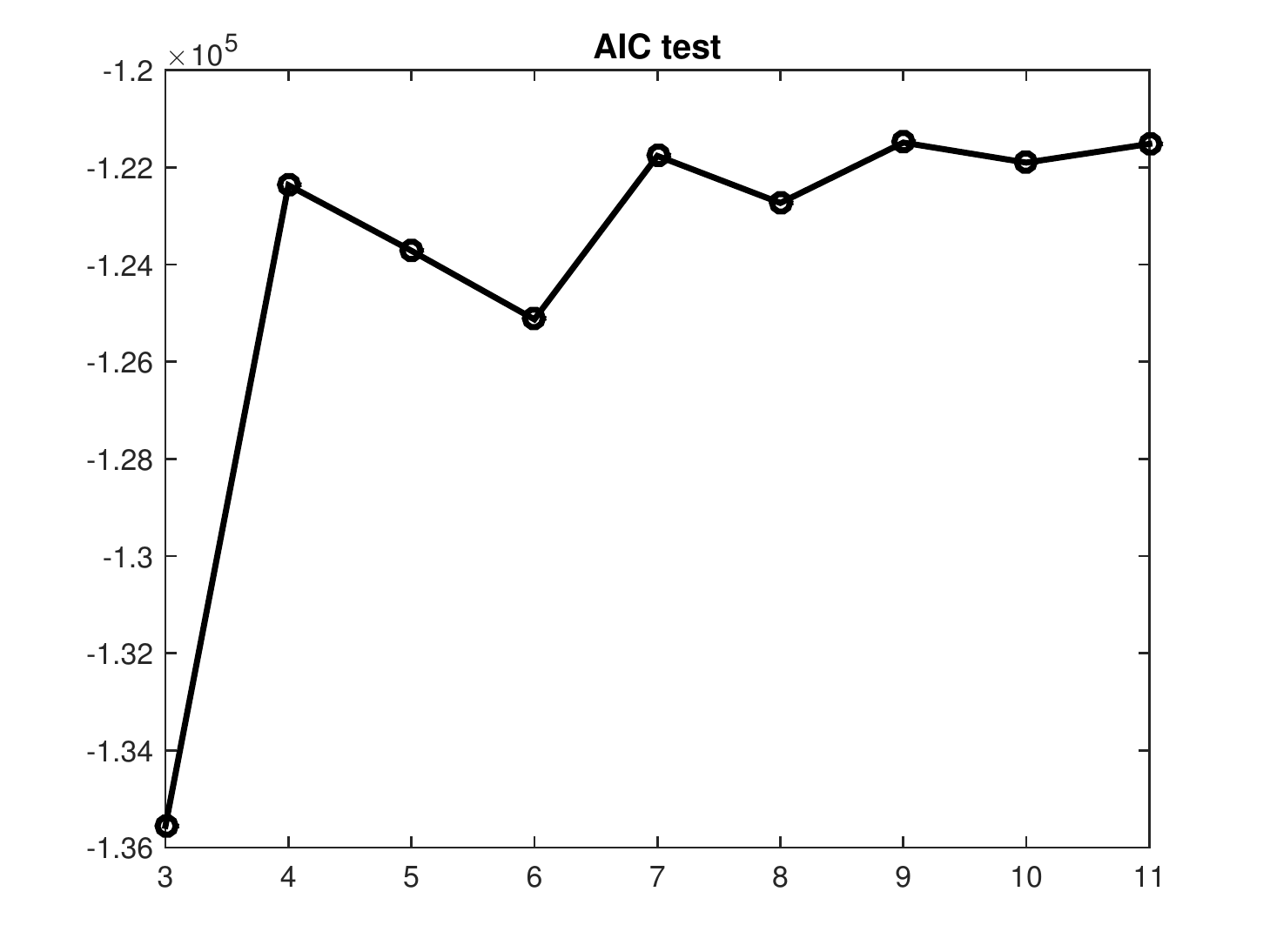}
\includegraphics[width=0.45\textwidth, height=0.3\textheight]{./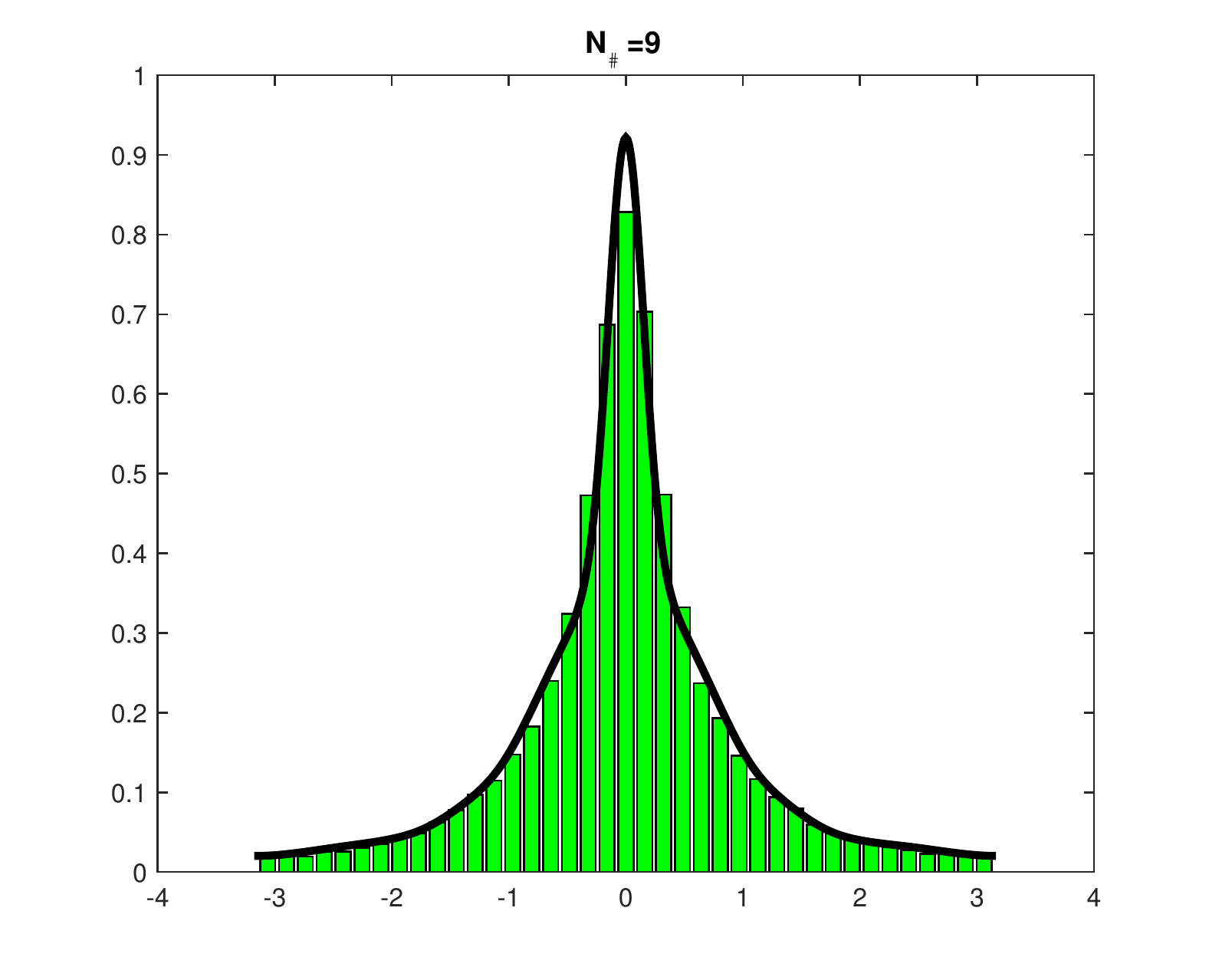}
\caption{Left, AIC test for the Gamma test. Right, fit with $N_\Theta=9$ basis functions.}
\label{gamma_fit}
\end{figure}

\paragraph{Financial Data.} As an example from a real world problem we report the result of fitting of the German Stock Exchange (DAX) index, which is publicly available, e.g.\ from the Yahoo Finance, see Figure \ref{dax_part} left panel. Within the data of all closing quotes between  April 1998 and March 2015, there are several periods of elevated volatility, so called volatility bursts. Obviously, this contradicts the description of the market with an exponential levy model $S(t)=e^{Y(t)}$ \cite{App,CT,Iac}, as the statistical law of $Y(t)-Y(s)$ does not only depend on $t-s$. In order to avoid the pitfalls of time dependent (or stochastic \cite{CT,Iac}) volatility, we identify a period of comparatively stable volatility of 1000 trading days between April 1998 and February 2002, see the right panel of Figure \ref{dax_part}. This data set has a small drift value $b= 6.787\times 10^{-4}$ which corresponds 
to the increased value of the stocks of 67\% nominal interest rate in 1000 trading days (followed by severe losses in the subsequent period). The empirical daily volatility (i.e. standard deviation of daily log-returns) in this period of time is $\sigma=9.304\times 10^{-3}$. 
The obvious absence of axial symmetry prohibits a Gaussian (Black-Scholes) market model from the outset.  Our goal is to find a suitable description of this sample with an exponential L\'evy market model from our hierarchy of parametrizations. 
\begin{figure}
 \begin{center}
 \includegraphics[width=0.47\textwidth]{./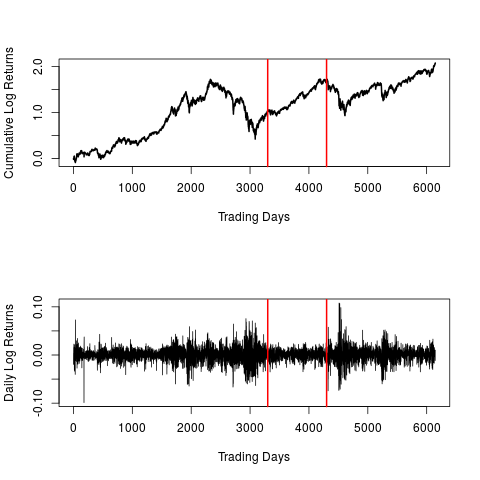}\includegraphics[width=0.47\textwidth]{./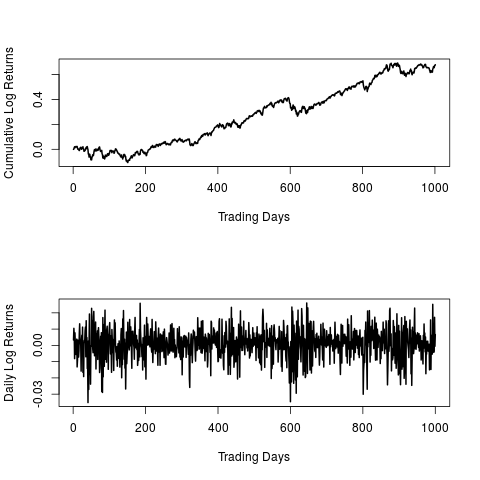}
\end{center}
\caption{Cumulative and daily log-returns of the DAX index between  26/11/1990 and  05/03/2015 (left panel) 16/4/1998 and 4/02/2002 (right panel). The vertical lines on the left panel correspond to the time period on the right panel, downloaded from finance.yahoo.com}\label{dax_part}
\end{figure}
The data has been mapped to the $[-\pi,\pi]$ torus, by rescaling and 'wrapping' the daily log-Returns
below/above 3\%, i.e. $[\Omega_a,\Omega_b]=[-0.03,0.03]$. Three data sets, all of them negative, were situated outside this band
\footnote{ Note that this corresponds to a loss of 10\% over four years being wrapped to the positive side. If the data is left skewed, as in the present sample, this might well introduce a bias in risk estimation to the optimistic side, if the procedure is used 'as is'. This can be mitigated with a larger torus such that no wrapping occurs, e.g. $[\Omega_a,\Omega_b]=[-0.05,0.05]$, see Section \ref{sec:Reg}.  
At the present stage, it is however not the intention of this work to provide a ready to use basis for risk estimates for financial applications.}.
The drift value is adapted to a re-scaled torus $[-\pi,\pi)$. Thus the drift on the torus of length $2\pi$ is $b=6.787\times 10^{-4}\pi/0.03= 0.07017$.

One fourth of the total empirical variance $8.655\cdot 10^{-5}$ is attributed for the 'fixed' diffusion, compare Section \ref{sec:Reg}, which yields 
a coefficient for the Laplace operator equals to $\pi^2 \mbox{(empirical Variance)}/(0.03)^2 /4/2=0.2372$ on the torus rescaled to $[-\pi,\pi)$.

With this setting we calculated the fitting of the distribution, with equally spaced basis functions
in the interval $[-\pi,\pi)$. In Fig. \ref{dax_fit} (left panel)  we report the result of the AIC test and the fit with $N_\Theta=6$ (right panel). The selected
basis functions are centred at $\theta_j=(-1+(j-1)/3)\pi, j=1,\ldots 6$, whose the calculated rates are
$ \bar\alpha = (0, 0, 0.484, 0.223, 0.304, 0)$. Although only three parameters are different from zero, the AIC is maximized at $\Nth=6$. That the AIC at $\Nth=3$ is lower is explained by the fact, that the more localized basis functions in the $\Nth=6$-basis are more adequate to fit the data. It is also a misinterpretation that the chosen parametrization misses an effective description with three parameters, since the position of the grid points are additional parameters.  Note that the zero entries of the 1st, 2nd and 6th slot $\bar \alpha$ actually correspond to small positive values and only represented as zero when rounded to the 3rd digit.

\begin{figure}
 \begin{center}
   \includegraphics[width=0.45\textwidth, height=0.3\textheight]{./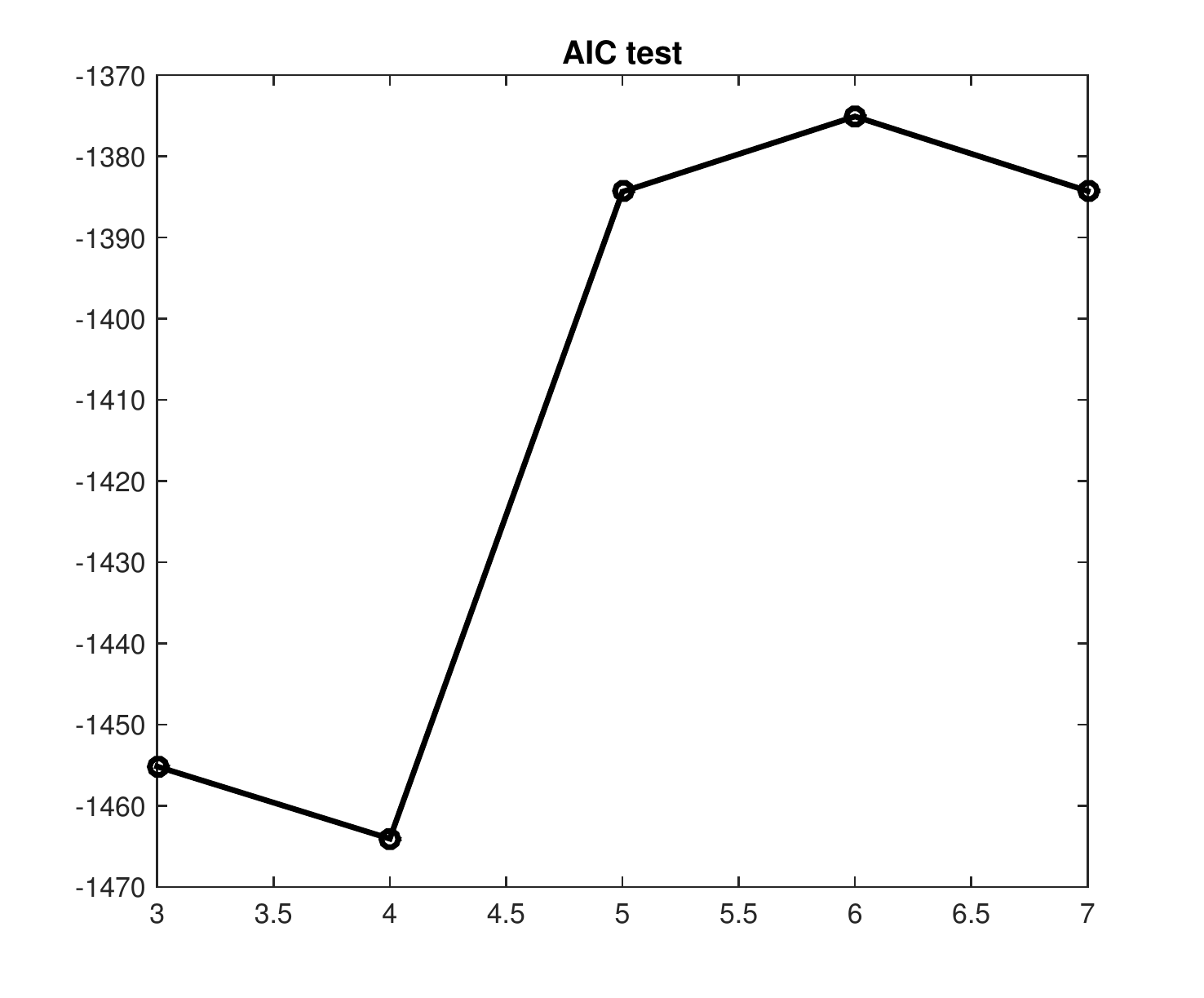}
 \includegraphics[width=0.45\textwidth, height=0.3\textheight]{./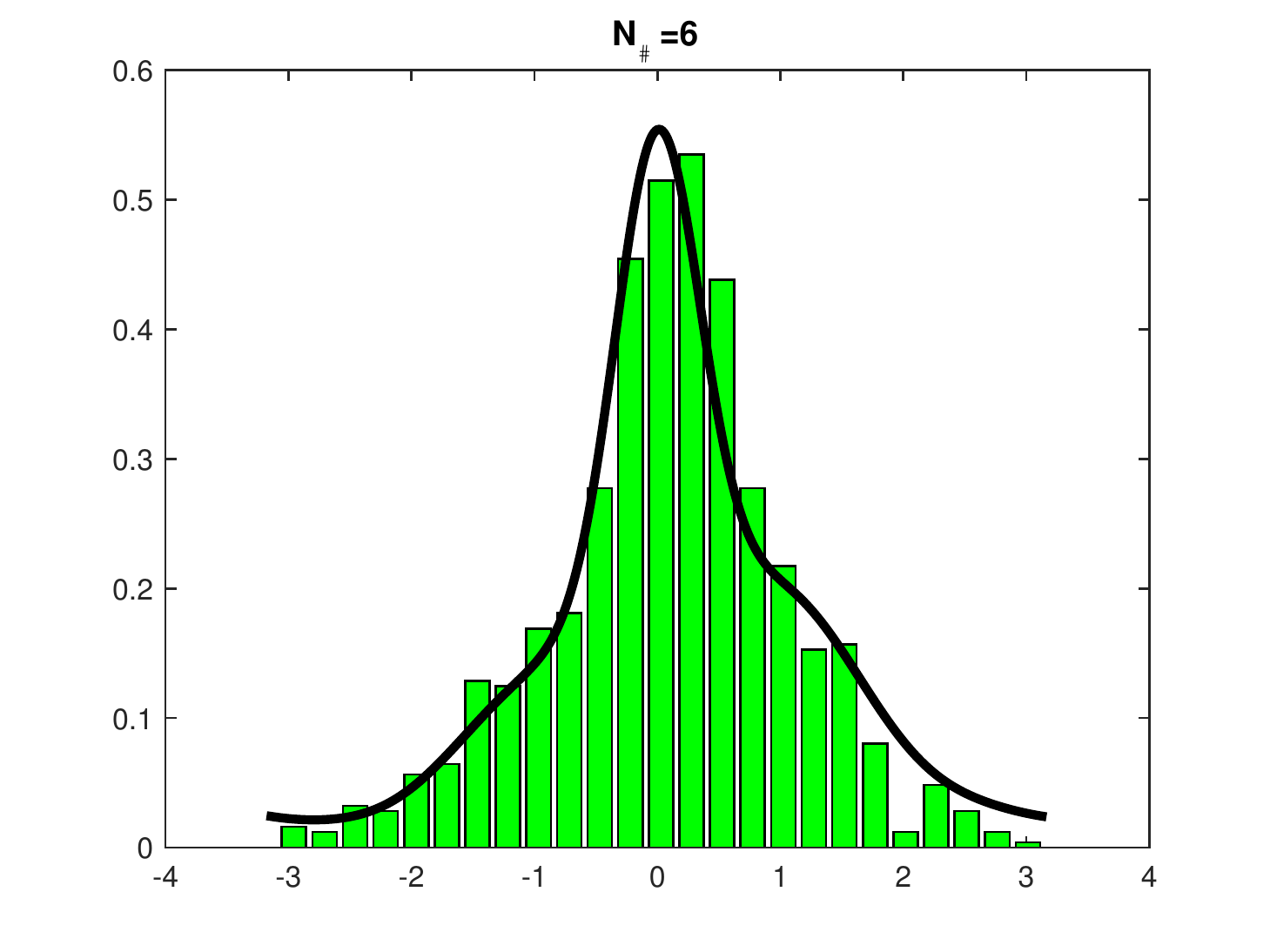}
 \end{center}
\caption{Left, AIC for the maximum likelihood estimate as a function of parameters for the DAX data (left). Right density for the Maximum-Likelihood fit with $N_\Theta=6$ basis functions, corresponding to the maximal AIC.}
\label{dax_fit}
\end{figure}

\section{\label{sec:Out}Conclusion and Outlook}
In the present article, we demonstrate the use of optimal control for PIDE for the non parametric estimation of L\'evy processes. Here the PIDE is given by Kolmogorov's forward equation which allows one to calculate the terminal distribution of the L\'evy process at time $T$. The objective functional is the log-likelihood evaluated on a sample of terminal values of the L\'evy process. 

Based on the study of L\'evy distributions, we set up approximate estimation problems 
that can be tackled by maximum likelihood estimation along with model selection based on Akaike's information criterion (AIC). As the density of L\'evy probability distributions in most 
cases can not be determined analytically, numerical solutions of the Kolmogorov forward equation (Fokker-Planck equation) and its backward (adjoint) analogue are needed for the efficient maximization of the log-likelihood functional. 

For the numerical solution of the optimality system we used the Chang-Cooper method with a
mid-point quadrature rule and second order backward time differentiation formula. This
numerical scheme is second order accurate and conservative, and we found conditions for 
stability and positivity of the numerical solution. We use a non linear
conjugate gradient method to find the optimality condition.

We have shown that this method works for spline discretizations of the density of the L\'evy measure 
with symmetric boundary conditions for up to 11 parameters. The results consistently fit simulated data from the family of discretizations itself. The same turns out to be true from L\'evy processes that only can be approximated by such discretizations, if the number of parameters goes to infinity, like the gamma process. Here the AIC provides an effective mechanism to choose an adequate discretization at a given sample size. Finally, we have demonstrated that also real-world, financial data can be effectively fitted using our strategy.

The future potential of this solution lies in the fact that, unlike FFT / spectral based calibration procedures 
that are widely used in financial engineering \cite{BR,CT,Iac}, the present approach naturally generalizes to 
processes that originate as the solution of Stochastic Differential Equations (SDE) with state dependent coefficients. Such local volatility models are frequently used in contemporary financial engineering.  

In this work, we used historic and low frequency data for non parametric model calibration. High frequency
historical data and implicit volatility data \cite{CT,Iac} are natural candidates to set up new objective functionals for 
related control problems that go beyond the control of the terminal distribution.  

Another relevant problem is the notorious occurrence of local minima in the maximum likelihood estimation. 
We expect this to be more severe, when the number of parameters significantly increases. An interesting hybrid 
approach would combine the robustness of non-parametric spectral calibration methods as a sort of pre-conditioner 
with the highly efficient maximum likelihood estimation.

\vspace{1cm}

\noindent {\bf Acknowledgements:} 
We would like to thank Alfio Borzi for interesting 
discussions and hospitality at the University of W\"urzburg.

\end{document}